\newtheorem{theorem}{Theorem}[section]
\newtheorem{corollary}[theorem]{Corollary}
\newtheorem{example}[theorem]{Example}
\newtheorem{lemma}[theorem]{Lemma}
\newtheorem{proposition}[theorem]{Proposition}
\newtheorem{remark}[theorem]{Remark}
\begin{document}
\begin{sloppypar}

\title{An other approach of the diameter of $\Gamma(R)$ and $\Gamma(R[X])$}

\author{A. Cherrabi\thanks{Corresponding Author: azzouz.cherrabi@gmail.com}, H. Essannouni, E. Jabbouri,  A. Ouadfel
}
 \affil{Laboratory of Mathematics, Computing and Applications-Information Security (LabMia-SI)\\
 Faculty of Sciences, Mohammed-V University in Rabat.\\
 Rabat. Morocco.}
\date{}
\maketitle
\begin{abstract}Using the new extension of the zero-divisor graph $\widetilde{\Gamma}(R)$ introduced in
\cite{Groupe}, we give an  approach of the diameter of $\Gamma(R)$ and $\Gamma(R[X])$ other than given
in \cite{Lucas} thus we give a complete characterization for the possible diameters $1$, $2$ or $3$
of $\Gamma(R)$ and $\Gamma(R[x])$.
\end{abstract}

\section*{Introduction}
The idea of a zero-divisor graph was introduced by I. Beck in \cite{Beck} while he was mainly
interested in colorings. In beck's work, the graph  $\Gamma_0(R)$ associated with a
nontrivial commutative unitary ring $R$ is the undirected simple graph where the vertices are all
elements of $R$ and two vertices $x$ and $y$ are adjacent if and only if $xy=0$.\\
the study of the interaction between the properties of ring theory and the properties of graph
theory begun with the article of D.F. Anderson and P.S. Livingston  where  they modified
the graph considering the zero-divisor graph  $\Gamma(R)$ with vertices in
$Z(R)^{\star}=Z(R)\setminus \{0\}$, where $Z(R)$ is the set of  zero-divisors of $R$,
and for distinct $x,y \in Z(R)^{\star}$, the vertices $x$ and $y$ are adjacent if and only if $xy=0$ \cite{AnderLiv}. Also, D. F Anderson and A. Badawi introduced the total graph $T(\Gamma (R))$ of a commutative ring $R$ with all elements of $R$ as vertices and for distinct $x,y \in R$, the vertices $x$ and $y$ are adjacent if and only if $x+y \in Z(R)$ \cite{AnderBada}.

In \cite{Groupe}, we introduced a new graph, denoted  $\widetilde{\Gamma}(R)$,
as the undirected simple graph  whose vertices are the nonzero zero-divisors of $R$ and for distinct
$x,y\in Z(R)^{\star}$, $x$ and $y$ are adjacent if and only if $xy=0$ or $x+y\in Z(R)$. \\
Recall that a path $P$ in the graph $G=(V,E)$ is a finite sequence $(x_0,\dots,x_k)$ of  distinct
vertices such that for all $i=0,\dots,k-1$, $x_ix_{i+1}$ is an edge. In this case, we said
that $x_0$ and $x_k$ are linked by $P=x_0\-- x_k$ and the length of $P$ is $k$, i.e., the number of its edges.
$G$ is said to be connected if each pair of distincts vertices belongs to a path. Also, if $G$
has a path $x\-- y$, then the distance between $x$ and $y$, written $d_G(x,y)$ or simply $d(x,y)$ is
the least length of a $x\--y$ path. If $G$ has no such path, then $d(x,y)=\infty$. The diameter of $G$,
denoted $diam(G)$, is the greatest distance between any two vertices in $G$. A graph $G$ is
complete if each pair of distinct vertices forms an edge, i.e., if $diam(G)=1$. \\
$R$ is a nontrivial commutative unitary ring and general references for commutative
 ring theory are \cite{A} and \cite{Kap}.

In \cite{Lucas}, T. G. Lucas  has studied situations where $diam( \Gamma(R))$ and
$diam( \Gamma(R[x]))$ are $=1,2$ or $3$ and gave a complete characterization of these diameter
strictly in terms of properties of the ring $R$.

In this paper, we  give another approach of this problem using the properties of the new
graph $\widetilde{\Gamma}(R)$. In the first section, we  begin by showing the
link between the non-completeness of $\widetilde{\Gamma}(R)$ and the diameter of $diam( \Gamma(R))$ and
we give a complete characterization of the diameter of $diam( \Gamma(R))$ using  the nature of the ring $R$.
In the second section, we give some  examples illustrating cases where the diameter of
$\Gamma(R)$ is $1$, $2$ or $3$.
The third section is reserved for characterization of the $diam( \Gamma(R[X]))$ in terms
of the nature of the ring $R$.

We recall that  $|Z(R)^{\star}|=1$ if
and only if $R\simeq \mathbb{Z}_4$ or $R\simeq \mathbb{Z}[X]/(X^2)$ (cf. Example 2.1, \cite{AnderLiv}) so
we  assume, along this paper, that $R$ is such that $|Z(R)^{\star}|>1$.

\section{ diameter of $\Gamma (R)$}
This section is devoted to the study of diameter of $\Gamma(R)$. We begin by recalling the Lucas's result:
\begin{theorem}( cf. theorem 2.6, \cite{Lucas}) Let $R$ be a ring.
\begin{enumerate}[ (1)]
  \item  $diam(\Gamma(R))=0$ if and only if R is (nonreduced and) isomorphic to either $\mathbb{Z}_4$ or
$\mathbb{Z}_2[y]/(y^2)$.
  \item $diam(\Gamma(R))=1$ if and only if $xy=0$ for each distinct pair of zero divisors and $R$ has
at least two nonzero zero divisors.
  \item $diam(\Gamma(R))=2$  if and only if either (i) $R$ is reduced with exactly two minimal primes
and at least three nonzero zero divisors, or (ii) $Z(R)$ is an ideal whose square is not
$(0)$ and each pair of distinct zero divisors has a nonzero annihilator.
  \item $diam(\Gamma(R))=3$ if and only if there are zero divisors $a\neq b$ such that $(0:(a,b))=(0)$
and either (i) $R$ is a reduced ring with more than two minimal primes, or (ii) $R$ is
nonreduced.
\end{enumerate}
\end{theorem}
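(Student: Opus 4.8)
The plan is to deduce the whole list from a single structural bridge between the two graphs, exploiting that $\Gamma(R)$ and $\widetilde{\Gamma}(R)$ share the vertex set $Z(R)^{\star}$ while $\widetilde{\Gamma}(R)$ has (weakly) more edges, together with the classical facts that $\Gamma(R)$ is connected and $diam(\Gamma(R))\le 3$. These already force $diam(\Gamma(R))\in\{0,1,2,3\}$, so it suffices to separate the four values. The trivial ends are quick: $diam(\Gamma(R))=0$ exactly when $\Gamma(R)$ is a single vertex, i.e. $|Z(R)^{\star}|=1$, which is the quoted $\mathbb{Z}_4$ / $\mathbb{Z}_2[y]/(y^2)$ dichotomy; and $diam(\Gamma(R))=1$ is by definition the completeness of $\Gamma(R)$, i.e. $xy=0$ for every distinct pair of zero-divisors, with at least two vertices present.

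The core of the argument is the bridge lemma $diam(\Gamma(R))=3\iff\widetilde{\Gamma}(R)$ is not complete. The easy direction is direct: a non-edge $\{x,y\}$ of $\widetilde{\Gamma}(R)$ means $xy\ne 0$ and $x+y\notin Z(R)$, and any common annihilator $c$ of $x$ and $y$ would satisfy $c(x+y)=0$ with $c\ne 0$, forcing $x+y\in Z(R)$, a contradiction. Hence $(0:(x,y))=(0)$, so $x$ and $y$ are non-adjacent in $\Gamma(R)$ and have no common neighbour; as $diam(\Gamma(R))\le 3$ this yields $d(x,y)=3$. I would then translate ``$\widetilde{\Gamma}(R)$ is not complete'' into ring terms --- the existence of distinct zero-divisors $a,b$ with $ab\ne 0$ and $a+b\notin Z(R)$, equivalently (by the annihilator computation just made) with $(0:(a,b))=(0)$ --- and split according to whether $R$ is reduced. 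In the reduced case I would use $Z(R)=\bigcup_i\mathfrak{p}_i$ over the minimal primes: with exactly two minimal primes one checks (picking $z\in\mathfrak{p}_2\setminus\mathfrak{p}_1$, so that $xz=yz=0$ when $x,y\in\mathfrak{p}_1$, and $xy\in\mathfrak{p}_1\cap\mathfrak{p}_2=(0)$ when the two lie in different primes) that any two zero-divisors either multiply to $0$ or share a nonzero annihilator, so no such $a,b$ exist, whereas with three or more minimal primes one separates them to produce a pair $a,b$ with $ab\ne 0$ and $(0:(a,b))=(0)$. In the nonreduced case one produces the required pair from a nonzero nilpotent together with a suitable zero-divisor.

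For the remaining value, $diam(\Gamma(R))=2$ is what is left after removing the three cases above, i.e. $\widetilde{\Gamma}(R)$ is complete but $\Gamma(R)$ is not and $|Z(R)^{\star}|>1$. Completeness of $\widetilde{\Gamma}(R)$ says precisely that $x+y\in Z(R)$ whenever $xy\ne0$; I would show this splits into exactly the two advertised scenarios. Either $Z(R)$ is itself an ideal (so $x+y\in Z(R)$ always), with $Z(R)^2\ne(0)$ encoding non-completeness of $\Gamma(R)$ --- given $Z(R)$ an ideal and a second vertex $y$, if all distinct products vanished then some $x$ has $x^2\ne0$ and $(x+y)x=x^2\ne0$ would violate completeness --- and the nonzero-annihilator condition encoding $diam\le2$, giving (ii). Or $R$ is reduced with exactly the two-minimal-prime configuration isolated above, giving (i), where the ``$\ge 3$ vertices'' clause is what rules out $diam=1$.

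The step I expect to be the real obstacle is the converse half of the bridge, namely $diam(\Gamma(R))=3\Rightarrow\widetilde{\Gamma}(R)$ not complete, equivalently producing, from a single distance-$3$ pair $a,b$, a genuine non-edge of $\widetilde{\Gamma}(R)$. The difficulty is that the natural candidate $\{a,b\}$ may fail to be a non-edge: one can have $ab\ne0$ and $(0:(a,b))=(0)$ yet $a+b\in Z(R)$, in which case there is $c\ne 0$ with $c(a+b)=0$ and one checks $ca=-cb\ne 0$. Ruling this out, or manufacturing a different non-edge from $a$, $b$ and such a $c$, is where the reduced-versus-nonreduced case analysis and the minimal-prime bookkeeping must do the work; I would therefore route this direction through the ring-theoretic characterisation (show $diam=3$ forces either more than two minimal primes in the reduced case or nonreducedness, and that each of these, with $(0:(a,b))=(0)$, yields $a+b\notin Z(R)$ for an appropriate pair) rather than attempting a purely graph-theoretic transfer.
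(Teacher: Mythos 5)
The statement you are proving is one the paper does not prove at all: it is recalled verbatim from Lucas (theorem 2.6 of \cite{Lucas}), and the paper's actual contribution (Theorem 1.12 and its supporting propositions) is an alternative classification. Measured against that material, your proposal has a fatal flaw: the central ``bridge lemma'' $diam(\Gamma(R))=3 \iff \widetilde{\Gamma}(R)$ is not complete is false. Only the forward direction holds (non-completeness of $\widetilde{\Gamma}(R)$ implies $diam(\Gamma(R))=3$; this is exactly Theorem 1.4 of the paper, proved by the same annihilator computation you give). The converse is contradicted by examples contained in the paper itself. Take $R=\mathbb{Z}_2^3$: it is boolean, so $\widetilde{\Gamma}(R)$ is complete (theorem 1.7 of \cite{Groupe2}, quoted in the paper), yet for $x=(1,1,0)$, $y=(1,0,1)$ one has $xy\neq 0$ and $ann(x)\cap ann(y)=(0)$, so $d_{\Gamma}(x,y)=3$ (Proposition 1.11). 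Likewise the $A+B$ ring of Example 2.6 has $Z(R)$ an ideal, hence $\widetilde{\Gamma}(R)$ complete (sums of zero-divisors stay in $Z(R)$), and yet $diam(\Gamma(R))=3$ by Proposition 1.8(2). Under your scheme, where diameter $2$ is ``what is left'' when $\widetilde{\Gamma}(R)$ is complete but $\Gamma(R)$ is not, both of these rings would be classified as diameter $2$, which is wrong.

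You do flag the converse of the bridge as ``the real obstacle,'' but the repair you propose --- show that $diam(\Gamma(R))=3$ produces a pair $a,b$ with $(0:(a,b))=(0)$ \emph{and} $a+b\notin Z(R)$ --- is precisely what the counterexamples exclude: in $\mathbb{Z}_2^3$ (reduced, three minimal primes) and in Example 2.6, no pair of zero-divisors satisfies both $ab\neq 0$ and $a+b\notin Z(R)$, since $\widetilde{\Gamma}$ is complete there. The same error surfaces when you call ``$ab\neq 0$ and $a+b\notin Z(R)$'' equivalent to ``$(0:(a,b))=(0)$'': only the left-to-right implication is valid, as your own later remark about the element $c$ with $ca=-cb\neq 0$ shows. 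The workable architecture --- and the one the paper adopts --- uses non-completeness of $\widetilde{\Gamma}(R)$ only as a \emph{sufficient} criterion for diameter $3$, and then analyzes the three completeness cases ($Z(R)$ an ideal; $R$ boolean; $R$ a subring of a product of two integral domains) separately, because diameter $3$ also occurs inside two of those complete cases (boolean rings $\not\simeq\mathbb{Z}_2^2$, and $Z(R)$ an ideal with some $ann(x,y)=(0)$). Any correct derivation of Lucas's theorem along these lines must incorporate that case analysis; it cannot be collapsed into a single graph-completeness dichotomy.
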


\begin{remark}As stated above, we assume that $R$ is such that $diam(\Gamma(R))\neq 0$, i.e.,
$R\not\simeq \mathbb{Z}_4$ and $R\not\simeq \mathbb{Z}_2[X]/(X^2)$. Also, we recall that $diam(\Gamma (R))\leq 3$ (cf. theorem 2.3, \cite{AnderLiv})  whose next lemma
is an immediate consequence.
\end{remark}

\begin{lemma} Let $x,y\in Z(R)^{\star}$. If $d_{\Gamma}(x,y)>2$, then $d_{\Gamma}(x,y)=3$.
\end{lemma}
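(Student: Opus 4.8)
The plan is to derive the statement directly from the Anderson--Livingston bound recalled in the preceding remark. The essential input is that $\Gamma(R)$ is connected and satisfies $diam(\Gamma(R))\le 3$ (Theorem 2.3 of \cite{AnderLiv}). First I would invoke connectedness: any two distinct vertices $x,y\in Z(R)^{\star}$ are joined by some path, so the distance $d_{\Gamma}(x,y)$ is a finite nonnegative integer rather than $\infty$. This step is what excludes the value $\infty$ that the general definition of distance would otherwise permit.

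Next I would use the definition of the diameter as the maximum of the pairwise distances: since $diam(\Gamma(R))\le 3$, every distance satisfies $d_{\Gamma}(x,y)\le 3$. Now assume $d_{\Gamma}(x,y)>2$. Because $d_{\Gamma}(x,y)$ is an integer, the strict inequality $d_{\Gamma}(x,y)>2$ forces $d_{\Gamma}(x,y)\ge 3$. Combining this with the upper bound $d_{\Gamma}(x,y)\le 3$ gives $d_{\Gamma}(x,y)=3$, which is exactly the assertion.

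There is essentially no genuine obstacle here: the lemma is a purely formal consequence of the integrality of graph distances together with the known upper bound on the diameter. The only subtlety worth stating explicitly is that one must secure finiteness of $d_{\Gamma}(x,y)$ via connectedness before squeezing it between $3$ and $3$; once that is in place, the conclusion is immediate.
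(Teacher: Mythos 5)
Your proof is correct and is exactly the argument the paper intends: the lemma is presented there as an immediate consequence of the Anderson--Livingston result that $\Gamma(R)$ is connected with $diam(\Gamma(R))\le 3$, which is precisely what you spell out (finiteness via connectedness, then squeezing the integer distance between $3$ and $3$). No difference in approach, only in the level of detail.
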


Using the new graph $\widetilde{\Gamma}(R)$, we obtain some cases where $diam(\Gamma (R))=3$:

\begin{theorem}If $\widetilde{\Gamma}(R)$ is not complete, then $diam(\Gamma (R))=3$.
\end{theorem}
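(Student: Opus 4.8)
The plan is to use the a priori bound recalled in Remark 1.2, namely that $diam(\Gamma(R))\le 3$ always holds. Because of this bound it suffices to exhibit a \emph{single} pair of vertices of $\Gamma(R)$ whose distance exceeds $2$: Lemma 1.3 will then force that distance, and hence the diameter, to equal $3$. Since $\widetilde{\Gamma}(R)$ is assumed not complete, there are distinct vertices $x,y\in Z(R)^{\star}$ that are \emph{not} adjacent in $\widetilde{\Gamma}(R)$. Unwinding the definition of adjacency in $\widetilde{\Gamma}(R)$, this non-adjacency means precisely that $xy\neq 0$ \emph{and} $x+y\notin Z(R)$; note that the second clause already forces $x+y\neq 0$, since $0\in Z(R)$. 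This pair $x,y$ is my candidate for a distance-$3$ pair in $\Gamma(R)$.

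First I would record that $x$ and $y$ are genuine, non-adjacent vertices of $\Gamma(R)$: they lie in $Z(R)^{\star}$, and since $xy\neq 0$ they are not joined by an edge of $\Gamma(R)$. Hence $d_{\Gamma}(x,y)\geq 2$. The whole argument then reduces to showing $d_{\Gamma}(x,y)\neq 2$, for then $d_{\Gamma}(x,y)>2$ and Lemma 1.3 immediately gives $d_{\Gamma}(x,y)=3$, whence $diam(\Gamma(R))=3$.

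The key step, and the one carrying the real content, is ruling out a path of length $2$ joining $x$ and $y$ in $\Gamma(R)$. Suppose toward a contradiction that there were an intermediate vertex $z\in Z(R)^{\star}$ with $xz=0$ and $zy=0$. Adding these two relations yields $(x+y)z=xz+zy=0$ with $z\neq 0$, so $x+y$ annihilates a nonzero element and is therefore a zero-divisor, i.e. $x+y\in Z(R)$. This contradicts the choice of the pair $x,y$. Consequently no length-$2$ path exists, so $d_{\Gamma}(x,y)>2$ and the conclusion follows as above.

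I expect the only genuinely substantive point to be this annihilator computation: the proof hinges on the observation that a common zero-divisor partner $z$ of $x$ and $y$ would simultaneously annihilate $x+y$, which is exactly what the second half of the non-adjacency condition in $\widetilde{\Gamma}(R)$ forbids. Everything else is bookkeeping with the definitions together with the bound $diam(\Gamma(R))\le 3$ and Lemma 1.3. The one convention worth double-checking is that $0\in Z(R)$, so that $x+y\notin Z(R)$ cannot be satisfied vacuously by $x+y=0$; this is consistent with the paper's use of $Z(R)^{\star}=Z(R)\setminus\{0\}$.
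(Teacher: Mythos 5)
Your proof is correct and follows essentially the same route as the paper: take a non-adjacent pair $x,y$ in $\widetilde{\Gamma}(R)$, observe that $xy\neq 0$ and $x+y\notin Z(R)$ force $ann(x)\cap ann(y)=(0)$, and conclude via Lemma 1.3. The only (harmless) difference is that the paper first invokes $diam(\widetilde{\Gamma}(R))=2$ from the cited theorem to produce the pair, whereas you extract it directly from the definition of non-completeness, which is slightly more self-contained.
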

\begin{proof}Since $\widetilde{\Gamma}(R)$ is not complete, then $diam(\widetilde{\Gamma}(R))=2$
((cf. \cite{Groupe}, theorem 2.1),
so there exists $x,y\in Z(R)^{\star}$ such that $d_{\widetilde{\Gamma}}(x,y)=2$ hence
$xy\neq 0$ and $x+y\notin Z(R)$ thus $ann(x)\cap ann(y)=(0)$ therefore $d_{\Gamma}(x,y)>2$ and
thus, by the previous lemma, $diam(\Gamma (R))=3$.
\end{proof}

\begin{corollary}If $Z(R)$ is not an ideal of $R$ and $R$ is neither boolean nor (up to isomorphism)
 a subring of a product of two integral domains, then $diam(\Gamma (R))=3$.
\end{corollary}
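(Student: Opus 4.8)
The plan is to derive the corollary directly from Theorem~1.3, which already tells us that $diam(\Gamma(R))=3$ as soon as $\widetilde{\Gamma}(R)$ fails to be complete. So the entire task reduces to showing that, under the three standing hypotheses ($Z(R)$ not an ideal, $R$ not boolean, $R$ not a subring of a product of two integral domains), the graph $\widetilde{\Gamma}(R)$ is not complete. Concretely, it suffices to exhibit a single non-edge, i.e. two distinct vertices $x,y\in Z(R)^{\star}$ with $xy\neq 0$ and $x+y\notin Z(R)$; once such a pair is found, Theorem~1.3 closes the argument with no further work.

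First I would extract a good candidate pair from the failure of $Z(R)$ to be an ideal. Since $Z(R)$ is always closed under multiplication by ring elements, it fails to be an ideal exactly when it is not closed under addition, so the hypothesis furnishes $a,b\in Z(R)$ with $a+b\notin Z(R)$. Using that $2a\in Z(R)$ for every $a\in Z(R)$, one checks that neither $a$ nor $b$ is $0$ and that $a\neq b$, so in fact $a,b\in Z(R)^{\star}$ are distinct with $a+b\notin Z(R)$. The only obstruction to $\{a,b\}$ being the desired non-edge is the possibility $ab=0$. At this point the cleanest route is to invoke the completeness criterion for $\widetilde{\Gamma}(R)$ established in \cite{Groupe}, namely that $\widetilde{\Gamma}(R)$ is complete precisely when $Z(R)$ is an ideal, or $R$ is boolean, or $R$ is (up to isomorphism) a subring of a product of two integral domains. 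The three hypotheses of the corollary negate exactly these three clauses, so the contrapositive yields non-completeness and Theorem~1.3 applies.

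If one wants a self-contained proof rather than citing that criterion, the main obstacle is precisely to rule out the degenerate situation in which \emph{every} pair $a,b\in Z(R)^{\star}$ with $a+b\notin Z(R)$ satisfies $ab=0$, and to show this degeneracy forces $R$ into one of the two excluded shapes. I would split on whether $R$ is reduced. In the reduced case I would write $Z(R)$ as the union of the minimal primes and analyse the pair above: the conditions $ab=0$ and $a+b\notin Z(R)$ should confine $a$ and $b$ to two distinct minimal primes and prevent a third, pinning the number of minimal primes to two and producing the embedding into a product of two integral domains, with the boolean possibility emerging as the sub-case rich in idempotents. In the non-reduced case I would use a nonzero nilpotent to build an annihilating partner for $a$ or $b$ and thereby contradict $a+b\notin Z(R)$ unless a genuine non-edge already exists. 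Carrying out these cases uniformly, and in particular cleanly separating the boolean alternative from the two-minimal-primes alternative, is the step I expect to be the most delicate.
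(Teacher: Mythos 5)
Your proposal is correct and follows exactly the paper's own route: invoke the completeness criterion for $\widetilde{\Gamma}(R)$ (which the paper cites as Theorem~1.7 of \cite{Groupe2}, not \cite{Groupe}) to conclude non-completeness from the three hypotheses, then apply the theorem that non-completeness of $\widetilde{\Gamma}(R)$ forces $diam(\Gamma(R))=3$. The additional sketch of a self-contained case analysis is unnecessary for the corollary and is not part of the paper's argument, but your main line of reasoning matches it precisely.
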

\begin{proof}
 Since $Z(R)$ is not an ideal of $R$ and $R$ is neither boolean nor a subring of a product of two
 integral domains, then, by theorem 1.7 \cite{Groupe2},  $\widetilde{\Gamma}(R)$ is not complet and,
 by the previous theorem,  $diam(\Gamma (R))=3$.
\end{proof}

\begin{remark} The previous theorem gives a method to construct graphs $\Gamma (R)$ of diameter $3$:
for example,  $diam(\Gamma (\mathbb{Z}_{12}))=3$ because $Z(\mathbb{Z}_{12})$ is not an ideal
($2+3\notin Z(\mathbb{Z}_{12})$) and $ \mathbb{Z}_{12}$ is neither boolean ( $\mathbb{Z}_{12}$ is
not isomorph to $\mathbb{Z}_{2}^n$) nor  a subring of a product of two integral domains ($\mathbb{Z}_{12}$
is not reduced).
\end{remark}

We know that $\widetilde{\Gamma}(R)$ is not complete if and only if $Z(R)$ is not an ideal of $R$ and
$R$ is neither boolean nor (up to isomorphism) a subring of a product of two integral domains
(cf. \cite{Groupe2}, theorem 1.7) so it is enough to treat the cases where $\widetilde{\Gamma}(R)$
is  complete to give a ring characterizations such that $diam(\Gamma (R))=1$, $2$ or $3$, i.e.,
the cases where $Z(R)$ is an ideal of $R$ or $R$ is boolean or $R$ is (up to isomorphism) a subring
of a product of two integral domains.

We have the following preliminary lemma:

\begin{lemma} \hfill
\begin{enumerate}[(1)]
  \item If $Z(R)^2=(0)$, then $Z(R)$ is an ideal.
  \item Let $R$ such that $Z(R)$ is an ideal. If $Z(R)^2\neq (0)$, then there exist a
  distinct pair of non-zero-divisors $x,y$ such that $xy\neq 0$.
  \item Let $R$ such that $Z(R)$ is an ideal. If there exist a  pair of zero-divisors
  $x,y$ such that $ann(x,y)=(0)$, then $x,y$ is distinct pair of non-zero-divisors such that $xy\neq 0$.
\end{enumerate}
\end{lemma}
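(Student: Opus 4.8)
The plan is to handle the three parts in turn, using throughout that $Z(R)$ always contains $0$ and absorbs multiplication by arbitrary ring elements; hence $Z(R)$ is an ideal exactly when it is additively closed, which is all (1) really demands. (I read ``non-zero-divisors'' in (2) and (3) as ``nonzero zero-divisors'', i.e. elements of $Z(R)^{\star}$, since those are the vertices of $\Gamma(R)$ and the literal reading is self-contradictory in (3).) For (1), I would take $a,b\in Z(R)$, assume both nonzero (otherwise $a+b$ is the other one and lies in $Z(R)$), and pick $c\neq 0$ with $ac=0$, so $c\in Z(R)^{\star}$; since $Z(R)^{2}=(0)$ forces $bc=0$, I get $(a+b)c=ac+bc=0$ with $c\neq 0$, whence $a+b\in Z(R)$, giving additive closure and thus the ideal property.

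For (2), the hypothesis $Z(R)^{2}\neq(0)$ produces $a,b\in Z(R)$ with $ab\neq 0$, forcing $a,b\in Z(R)^{\star}$; if $a\neq b$ I am done. The only remaining case is $a=b$ with $a^{2}\neq 0$, where I would fix $c\neq 0$ with $ac=0$ and take $x=a$, $y=a+c$: idealness gives $y\in Z(R)$, while $y\neq 0$ (else $c=-a$ and $a^{2}=-ac=0$), $x\neq y$, and $xy=a^{2}+ac=a^{2}\neq 0$, so $x,y$ is the required pair.

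Part (3) carries the real content. Given $x,y$ with $ann(x,y)=ann(x)\cap ann(y)=(0)$, the assertions $x,y\neq 0$ and $x\neq y$ are immediate, since any such failure would force $ann(x,y)$ to contain a nonzero annihilator of one of the two elements (or all of $R$), contradicting $ann(x,y)=(0)$. The crux is $xy\neq 0$, which I would prove by contradiction: assuming $xy=0$, idealness gives $x+y\in Z(R)$, and $x+y\neq 0$ (were $x+y=0$ we would get $y^{2}=-xy=0$, so the nonzero $y$ would lie in $ann(x)\cap ann(y)$). Hence some $w\neq 0$ satisfies $(x+y)w=0$, i.e. $xw=-yw$; setting $u:=xw=-yw$, the relation $xy=0$ yields $xu=x(-yw)=-(xy)w=0$ and $yu=y(xw)=(xy)w=0$, so $u\in ann(x)\cap ann(y)=(0)$, and then $xw=0=yw$ forces $w\in ann(x)\cap ann(y)=(0)$, contradicting $w\neq 0$.

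The step I expect to be the genuine obstacle is precisely this last one. The naive attempt---annihilating $x$ and $y$ separately by elements $c,d$ and using $cd$---can fail because $cd$ may vanish. The device that rescues the argument is to annihilate the \emph{sum} $x+y$ (legitimate only because $Z(R)$ is an ideal) and then exploit $xy=0$ through the single product $u=xw$, which kills $x$ and $y$ simultaneously while remaining nonzero as long as $w$ is. This is the one place where the ideal hypothesis on $Z(R)$ is used essentially rather than cosmetically.
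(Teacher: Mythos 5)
Your proposal is correct (and your reading of ``non-zero-divisors'' as ``nonzero zero-divisors'' is the intended one), but it does not everywhere follow the paper's route, and in one place it is actually better. Part (3) is essentially the paper's own argument: pick $a\neq 0$ with $a(x+y)=0$ --- legitimate precisely because $Z(R)$ is an ideal --- and, assuming $xy=0$, show $ax=-ay$ lies in $ann(x)\cap ann(y)=(0)$, whence $a\in ann(x,y)=(0)$, a contradiction; your $w$ and $u=xw$ are the paper's $a$ and $ax$, and your extra check that $x+y\neq 0$ is unnecessary (membership in $Z(R)$ already supplies a nonzero annihilator even for the zero element) but harmless. Part (1) differs only cosmetically: the paper observes $Z(R)\subseteq Nil(R)$ when $Z(R)^2=(0)$, hence $Z(R)=Nil(R)$ is an ideal, while you verify additive closure directly; both are fine. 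The genuine divergence is part (2). The paper's proof begins ``let $x\in Z(R)^{\star}$ such that $x^2\neq 0$'' and then splits on $2x\neq 0$ versus $2x=0$; but the existence of such an $x$ does not follow from $Z(R)^2\neq (0)$: in $R=\mathbb{Z}_2[u,v]/(u^2,v^2)$ the set $Z(R)=(u,v)$ is an ideal with $Z(R)^2=(uv)\neq (0)$, yet every element of $Z(R)$ squares to zero. Your version repairs this gap: you start from a product $ab\neq 0$ with $a,b\in Z(R)$, which genuinely exists when $Z(R)^2\neq(0)$, dispose of the case $a\neq b$ at once, and in the case $a=b$ (so $a^2\neq 0$) run the perturbation $y=a+c$ with $ac=0$, $c\neq 0$ --- the same trick the paper uses in its $2x=0$ case, but valid in all characteristics. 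So your argument is not only correct but closes a real gap in the paper's proof of (2).
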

\begin{proof}(1)  Suppose that $Z(R)^2=(0)$ so $Z(R)\subset Nil(R)$, where $Nil(R)$ is the nilradical of
$R$, then  $Z(R)= Nil(R)$ hence $Z(R)$ is an ideal.\\
(2) Let $x\in Z(R)^{\star}$ such that $x^2\neq 0$. It is clear that if $2x\neq 0$, then $x,-x $ is
a distinct pair of non-zero-divisors $x,y=-x$ such that $xy\neq 0$.
Suppose that $2x=0$ and let $a\in Z(R)^{\star}$ such that $ax=0$. Let $y=a+x$ so $y\in Z(R)$
because $Z(R)$ is an ideal. Also,  $y\neq x$ and $yx=(a+x)x=x^2\neq 0$ then  $x,y$ is a distinct pair
of zero-divisors such that $xy\neq 0$ thus $diam(\Gamma(R))>1$ therefore $diam(\Gamma(R))=2$
because for each  pair  of zero-divisors $x,y$, $ann(x,y)\neq (0)$.\\
(3) Suppose that there exist a  pair of zero-divisors $x,y$ such that
 $ann(x,y)=(0)$ so $x\neq 0$, $y\neq 0$ and $x\neq y$. Also, $x+y\in Z(R)$ because $Z(R)$ is an ideal so
 there exist $a\in R\setminus \{0\}$ such that $a(x+y)=0$ then $ax=-ay$. We claim
 that $xy\neq 0$, indeed, if $xy=0$, so $(ax).x=-ayx=0$ and $(ax)y=0$ hence $ax\in ann(x,y)=(0)$. Also,
 $(ay)x=0$ and $ (ay)y=-axy=0$ then $ay\in ann(x,y)=(0)$ therefore $a\in ann(x,y)=(0)$. \\
\end{proof}

\begin{proposition}\hfill
\begin{enumerate}[(1)]
  \item Let $R$ such that $Z(R)$ is an ideal and $Z(R)^2\neq (0)$.
  If for each distinct pair of zero-divisors $x,y$, $ann(x,y)\neq (0)$, then $diam(\Gamma(R))=2$.
  \item Let $R$ such that $Z(R)$ is an ideal. If there exist a  pair of zero-divisors
  $x,y$ such that $ann(x,y)=(0)$, then $diam(\Gamma(R))=3$.
\end{enumerate}
\end{proposition}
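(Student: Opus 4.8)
The plan is to read off both claims directly from the preliminary lemma, using the elementary but organising observation that for distinct $u,v\in Z(R)^{\star}$ the set $ann(u,v)=ann(u)\cap ann(v)$ is exactly the pool of possible common neighbours of $u$ and $v$ in $\Gamma(R)$; together with the non-adjacency criterion $uv\neq 0$ this pins down the distance $d_{\Gamma}(u,v)$ within the admissible range guaranteed by the earlier lemma (if $d_{\Gamma}(u,v)>2$ then $d_{\Gamma}(u,v)=3$).

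For part (1), I would first secure the lower bound. Since $Z(R)$ is an ideal with $Z(R)^{2}\neq (0)$, part (2) of the preliminary lemma produces a distinct pair of zero-divisors with nonzero product, so $\Gamma(R)$ has a non-adjacent pair and $diam(\Gamma(R))>1$. For the matching upper bound I would take an arbitrary distinct pair $u,v\in Z(R)^{\star}$: if $uv=0$ they are adjacent, and if $uv\neq 0$ the standing hypothesis gives a nonzero $z\in ann(u,v)$, which lies in $Z(R)^{\star}$ (it annihilates the nonzero $u$) and is distinct from both $u$ and $v$ (equality with either would force $uv=0$). Thus $z$ is a common neighbour and $d_{\Gamma}(u,v)\le 2$ in all cases, giving $diam(\Gamma(R))=2$.

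For part (2), I would feed the given pair $x,y$ with $ann(x,y)=(0)$ into part (3) of the preliminary lemma to learn that $x,y$ are distinct zero-divisors with $xy\neq 0$, hence non-adjacent. Any hypothetical common neighbour $z$ would be a nonzero element of $ann(x)\cap ann(y)=ann(x,y)=(0)$, which is impossible; so $x$ and $y$ admit no path of length $1$ or $2$ and $d_{\Gamma}(x,y)>2$. The earlier lemma then upgrades this to $d_{\Gamma}(x,y)=3$, and therefore $diam(\Gamma(R))=3$.

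Neither part presents a genuine difficulty once the preliminary lemma is available; the work is essentially bookkeeping about annihilators. The one place I would be careful is the length-$2$ step in each part, namely verifying that the annihilating element $z$ is a legitimate vertex distinct from the two endpoints rather than coinciding with one of them — and it is precisely the non-adjacency condition $uv\neq 0$ (respectively $xy\neq 0$) that guarantees this, so that condition must be invoked before, not after, exhibiting the common neighbour.
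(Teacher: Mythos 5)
Your proposal is correct and follows essentially the same route as the paper: both parts rest on the preliminary lemma (parts (2) and (3)) to produce the non-adjacent pairs, use a nonzero element of $ann(x,y)$ as a common neighbour for the upper bound in part (1), and invoke the earlier lemma ($d_{\Gamma}(x,y)>2$ implies $d_{\Gamma}(x,y)=3$) to conclude in part (2). Your only addition is the explicit check that the common neighbour is a vertex distinct from the two endpoints, a detail the paper leaves implicit.
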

\begin{proof}(1) By lemma 1.7, there exist a distinct pair of
  zero-divisors $a,b$ such that $ab\neq 0$ then $diam(\Gamma(R))>1$. Let $x,y\in Z(R)^{\star}$ such that
$d_\Gamma (x,y)>1$ so  $ann(x,y)\neq (0)$ hence $d_\Gamma (x,y)=2$.\\
(2) Suppose that there exist a  pair of zero-divisors
  $x,y$ such that $ann(x,y)=(0)$, then, by the previous lemma, $x,y$ is distinct pair
  of non-zero-divisors such that $xy\neq 0$ so $diam(\Gamma(R))>1$ and since $ann(x,y)= (0)$,
  $diam(\Gamma(R))>2$ then $diam(\Gamma(R))=3$.
\end{proof}

\begin{remark}Let $R$ such that $Z(R)$ is an ideal. By the previous proof, if there
exist a  pair of zero-divisors $x,y$ such that $ann(x,y)=(0)$ so $xy\neq 0$ then
then $Z(R)^2\neq 0$.
\end{remark}

\begin{proposition} If $R$ is (up to isomorphism)  a subring of a product of two integral domains
and $R\not \simeq \mathbb{Z}_2^2$, then $diam(\Gamma(R))=2$.
\end{proposition}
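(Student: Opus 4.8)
The plan is to show that the hypothesis forces $\Gamma(R)$ to be a complete bipartite graph, whose diameter is immediately $2$ once we exclude the single-edge case $\mathbb{Z}_2^2$. First I would identify $R$ with its image in $D_1\times D_2$ for integral domains $D_1,D_2$, and introduce the two ideals $A=R\cap(D_1\times\{0\})$ and $B=R\cap(\{0\}\times D_2)$, which are exactly the kernels of the coordinate projections $\pi_1,\pi_2$ restricted to $R$; note $A\cap B=(0)$ and $AB=(0)$. The first key step is to pin down $Z(R)^{\star}$: because $D_1,D_2$ are domains, an element $(a,b)$ with $a\neq 0$ and $b\neq 0$ has only the trivial annihilator, so it is not a zero-divisor; hence every nonzero zero-divisor lies in $A\cup B$. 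Conversely, a nonzero element of $A$ is annihilated precisely by $B$ (and symmetrically), so it is a zero-divisor as soon as $B\neq(0)$. I would then observe that both $A$ and $B$ are nonzero: if, say, $A=(0)$, then $\pi_2$ is injective and $R$ embeds in the domain $D_2$, forcing $Z(R)^{\star}=\emptyset$ and contradicting the standing assumption $|Z(R)^{\star}|>1$. Thus $Z(R)^{\star}=(A\setminus\{0\})\cup(B\setminus\{0\})$ with both pieces nonempty.

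Next I would read off the edges. For distinct nonzero $(a,0),(a',0)\in A$ the product is $(aa',0)$ with $aa'\neq 0$ (domain), so they are not adjacent; the same holds inside $B$; while $(a,0)(0,b)=(0,0)$ shows every vertex of $A\setminus\{0\}$ is adjacent to every vertex of $B\setminus\{0\}$. Therefore $\Gamma(R)$ is the complete bipartite graph with parts $A\setminus\{0\}$ and $B\setminus\{0\}$. Its diameter is $1$ exactly when both parts are singletons, and is $2$ otherwise, since two vertices lying in the same part are joined through any vertex of the opposite part.

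It then remains to show that the singleton case is precisely $R\simeq\mathbb{Z}_2^2$. If $A\setminus\{0\}=\{(a,0)\}$, then $A$ is a two-element ideal, so $(a,0)$ has additive order $2$, and $(a,0)^2=(a^2,0)\in A$ forces (since $a^2\neq 0$) the idempotency $a^2=a$, whence $a=1$ in $D_1$; symmetrically $B\setminus\{0\}=\{(0,1)\}$. Multiplying an arbitrary $(x,y)\in R$ by the idempotents $(1,0)$ and $(0,1)$ lands in $A$ and $B$ respectively, forcing $x,y\in\{0,1\}$, so $R=\{0,1\}\times\{0,1\}\simeq\mathbb{Z}_2^2$. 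Since $R\not\simeq\mathbb{Z}_2^2$ by hypothesis, at least one part has size $\geq 2$, and $diam(\Gamma(R))=2$.

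The routine parts are the annihilator and edge computations, which follow directly from the domain property. I expect the main obstacle to be the exceptional-case analysis: establishing rigorously that two singleton parts collapse to $\mathbb{Z}_2^2$, in particular using idempotency to confine all entries to $\{0,1\}$ and thereby determine $R$ exactly. As an alternative one could instead verify that $A$ and $B$ are the two minimal primes of the reduced ring $R$ (via prime avoidance on $Z(R)=A\cup B$) and that $|Z(R)^{\star}|\geq 3$ when $R\not\simeq\mathbb{Z}_2^2$, and then invoke part (3)(i) of Theorem 1.1; but the direct complete-bipartite argument is cleaner and more self-contained.
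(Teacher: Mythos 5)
Your proof is correct, and its core observation --- that $Z(R)^{\star}$ splits into the two coordinate ``axes'' $A\setminus\{0\}$ and $B\setminus\{0\}$, with same-side vertices never adjacent and opposite-side vertices always adjacent, so that $\Gamma(R)$ is complete bipartite --- is the same geometric fact that drives the paper's proof, where it appears only implicitly (in the WLOG reduction $x=(x_1,0)$ and in joining two same-side vertices $z,t$ by a two-edge path through $b$). Where you genuinely diverge is in ruling out diameter $1$. The paper does this by invoking the classification of rings with exactly two nonzero zero-divisors together with reducedness of $R$: if $|Z(R)^{\star}|=2$ then $R$ is one of $\mathbb{Z}_9$, $\mathbb{Z}_2^2$, $\mathbb{Z}_3[X]/(X^2)$ (the paper writes $\mathbb{Z}_3[X]/(X^3)$, evidently a typo, since that ring has eight nonzero zero-divisors), and the two nonreduced candidates are excluded; this yields a third vertex non-adjacent to $a$. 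You instead note that a complete bipartite graph has diameter $1$ only when both parts are singletons, and show directly --- via the additive order $2$ and idempotency of the unique nonzero element of each part, then coordinatewise confinement of $R$ to $\{0,1\}\times\{0,1\}$ --- that this degenerate case forces $R\simeq\mathbb{Z}_2^2$. Your route buys self-containedness (no appeal to the external enumeration, and it sidesteps the typo) and it makes rigorous the fact, used silently in the paper, that every nonzero zero-divisor of $R$ has exactly one nonzero coordinate; the paper's route is shorter on the page at the cost of the citation. One cosmetic slip: your $A=R\cap(D_1\times\{0\})$ is the kernel of $\pi_2$ restricted to $R$, not of $\pi_1$; your later use (``$A=(0)$ implies $\pi_2$ injective'') is the correct pairing.
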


\begin{proof}Since  $R$ is a subring of a product of two integral domains and $R$ is not an integral
domain, there exists $a=(a_1,0),b=(0,a_2)\in Z(R)^{\star}$. \\
We claim that $|Z(R)^{\star}|\geq 3$, indeed, if $|Z(R)^{\star}|= 2$, $R\simeq \mathbb{Z}_9$ or $\mathbb{Z}_2^2$ or $\mathbb{Z}_3[X]/(X^3)$ then $R\simeq \mathbb{Z}_9$ or $\mathbb{Z}_3[x]/(x^3)$ (because $R\not \simeq \mathbb{Z}_2^2$). However, $\mathbb{Z}_9$ and $\mathbb{Z}_3[x]/(x^3)$ are not reduced but $R$ is reduced then $|Z(R)^{\star}|\geq 3$.\\
Let $x\in Z(R)^{\star}\setminus \{a,b\}$ and suppose that $x=(x_1,0)$ (the other case is similar). Since $x\neq a$ and $ax\neq 0$ so $diam(\Gamma(R))>1$. Also, let $z,t\in Z(R)^{\star}$ such that $d_{\Gamma(R)}(z,t)>1$ so we can suppose that $z=(z_1,0)$ and $t=(t_1,0)$ then $z\--b\--t$ hence $d_{\Gamma}(z,t)=2$ and thus $diam(\Gamma(R))=2$.\\
\end{proof}

\begin{proposition}Let $R$ be a boolean ring. If  $R\not \simeq \mathbb{Z}_2^2$, then $diam(\Gamma(R))=3$.
\end{proposition}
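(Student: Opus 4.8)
The plan is to argue directly, since a boolean ring is one of the exceptional cases where $\widetilde{\Gamma}(R)$ is complete, so Theorem 1.4 does not apply here. I would first record the arithmetic of a boolean ring: every element is idempotent, $2r=0$ for all $r$, the only unit is $1$, and for $x\neq 0,1$ one has $x(1+x)=0$ with $x\neq 0$ and $1+x\neq 0$; hence $Z(R)^{\star}=R\setminus\{0,1\}$, which is exactly the vertex set of $\Gamma(R)$. Recalling that $\Gamma(R)$ is connected with $diam(\Gamma(R))\leq 3$, by Lemma 1.3 it is enough to exhibit a single pair of distinct vertices $x,y$ with $d_{\Gamma}(x,y)>2$, for then $d_{\Gamma}(x,y)=3$ and $diam(\Gamma(R))=3$.

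Next I would translate the condition $d_{\Gamma}(x,y)>2$ into ring arithmetic. For $x\neq 0,1$ one checks $ann(x)=(1+x)R$ (since $rx=0$ forces $r=r(1+x)$), and consequently $ann(x)\cap ann(y)=(1+x)(1+y)R$. Therefore, if $x,y\in Z(R)^{\star}$ are distinct with $xy\neq 0$ and $(1+x)(1+y)=0$, then $x$ and $y$ are non-adjacent and share no common neighbour (their common annihilator being $(0)$), so $d_{\Gamma}(x,y)>2$. The whole proof thus reduces to producing two elements $x\neq y$ of $R\setminus\{0,1\}$ with $xy\neq 0$ and $(1+x)(1+y)=0$.

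To build such a pair I would look for a chain of idempotents $0\neq c$, $c\neq a$, $a\neq 1$ with $ca=c$ (that is, a proper nonzero element $a$ together with a proper nonzero ``divisor'' $c$ of it). Given such $c,a$, set $x=a$ and $y=1+c$. Using $ca=c$ and $2r=0$ one computes $xy=a(1+c)=a+c\neq 0$ (as $a\neq c$) and $(1+x)(1+y)=(1+a)c=c+c=0$; moreover $y\neq 0,1$ because $c\neq 1,0$, and $x\neq y$ because $a=1+c$ would give, after multiplying by $a$, the equality $a=a+c$, i.e. $c=0$, a contradiction. Hence this pair meets the requirements of the previous paragraph.

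The crux is therefore to guarantee the existence of such a chain whenever $R\not\simeq \mathbb{Z}_2^2$ (note $R\neq \mathbb{Z}_2$ since $|Z(R)^{\star}|>1$); equivalently, to find a proper nonzero idempotent that is not an atom of the associated Boolean algebra. I expect this to be the main obstacle, and I would settle it by contradiction: suppose every $a$ with $0\neq a\neq 1$ is an atom (has no proper nonzero divisor). Choosing such an $a$, its complement $1+a$ is again of this form, hence an atom; if $R=\{0,1,a,1+a\}$ then $R\simeq \mathbb{Z}_2^2$, which is excluded. Otherwise I pick $b\notin\{0,1,a,1+a\}$, necessarily an atom; since $a,b$ are distinct atoms one has $ab=0$ and $a+b$ properly dominates $a$, so $a+b$ is a \emph{non}-atom, while $a+b\neq 1$ because $a+b=1$ would force $b=1+a$. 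This contradicts the standing assumption, completing the argument. The only delicate point is that this case analysis must also cover infinite (possibly atomless) boolean rings, which the atom/complement dichotomy above does uniformly.
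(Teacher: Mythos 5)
Your proof is correct, and it reaches the same crux as the paper---exhibiting distinct $x,y\in Z(R)^{\star}$ with $xy\neq 0$ and $ann(x)\cap ann(y)=(0)$, then invoking Lemma 1.3---but by a genuinely different mechanism for producing that pair. The paper applies the Peirce decomposition twice: since $R\not\simeq\mathbb{Z}_2$ there is $e\in R\setminus\{0,1\}$ with $R\simeq Re\oplus R(1-e)$, and since $R\not\simeq\mathbb{Z}_2^2$ one factor splits again, giving $R\simeq R_1\oplus R_2\oplus R_3$ with all $R_i$ nontrivial boolean rings; then $x=(1,1,0)$, $y=(1,0,1)$ immediately satisfy $xy\neq 0$ and $ann(x)\cap ann(y)=(0)$. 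You instead stay inside $R$: the identity $ann(x)=(1+x)R$, hence $ann(x)\cap ann(y)=(1+x)(1+y)R$, reduces everything to finding idempotents with $0\neq c$, $ca=c$, $c\neq a\neq 1$, whose existence you settle by the atom/non-atom dichotomy. The two routes are equivalent in content: your chain is exactly the element-level shadow of the three-factor decomposition (take $c=(1,0,0)$, $a=(1,1,0)$; conversely $c$, $a+c$, $1+a$ are orthogonal idempotents summing to $1$). The paper's double splitting disposes of the existence question in two lines and covers atomless boolean rings with no case analysis at all, whereas the atom argument is the longest part of your proof; what your version buys in exchange is the explicit principal-ideal formula for annihilators in a boolean ring, a fully element-wise argument that never invokes the product decomposition, and a reusable criterion ($xy\neq 0$ and $(1+x)(1+y)=0$ force $d_{\Gamma}(x,y)=3$).
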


\begin{proof} Since $R$ is boolean and $R\not \simeq \mathbb{Z}_2$, there exists
$e\in R\setminus \{0,1\}$ such that $R\simeq Re\oplus R(1-e)$. Also, since
$R\not \simeq \mathbb{Z}_2^2$, we can suppose that $Re \not \simeq \mathbb{Z}_2$ thus,
since $Re$ is boolean,  there exists $e'\in Re\setminus \{0,1\}$ such that
$Re\simeq Re'\oplus R(1-e')$ therefore we can suppose that $R\simeq R_1 \oplus R_2\oplus R_3$,
with $R_1,R_2,R_3$ boolean rings. Let $x=(1,1,0),y=(1,0,1)\in Z(R)^{\star}$ so $x\neq y$, $xy\neq 0$
and $ann(x)\cap ann(y)=(0)$ then $d_{\Gamma(R)}(x,y)>2$ hence, by  lemma 1.3, $diam(\Gamma(R))=3$.
\end{proof}

\begin{theorem}\hfill
\begin{enumerate}[(1)]
  \item  $diam(\Gamma(R))=1$ if and only if $R \simeq \mathbb{Z}_2^2$ or  $Z(R)^2= (0)$.
  \item $diam(\Gamma(R))=2$ if and only if ($R$ is (up isomorphism) a subring of a product of two
  integral domains and $R\not \simeq \mathbb{Z}_2^2$) or ($Z(R)$ is an ideal, $Z(R)^2\neq (0)$ and
  for each distinct pair of zero-divisors $x,y$, $ann(x,y)\neq (0)$.
  \item $diam(\Gamma(R))=3$  if and only if ($R$ is boolean and $R\not \simeq \mathbb{Z}_2^2$) or
  ($Z(R)$ is not an ideal and $R$ is neither boolean nor a subring of a product of
  two integral domains) or ($Z(R)$ is  an ideal and there
  there exist a  pair of zero-divisors $x,y$ such that $ann(x,y)=(0))$.
\end{enumerate}
\end{theorem}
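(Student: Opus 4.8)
The plan is to use that, under the standing hypotheses and Remark 1.2, one always has $diam(\Gamma(R))\in\{1,2,3\}$. Hence it suffices to prove two things: (a) each of the three listed conditions forces the stated value of $diam(\Gamma(R))$ (the ``if'' directions), and (b) every admissible ring $R$ satisfies at least one of the three conditions. Granting (a) and (b), the biconditionals follow immediately: if $diam(\Gamma(R))=i$, then by (b) $R$ satisfies some condition $C_j$, whence by (a) $diam(\Gamma(R))=j$, forcing $j=i$ and so $R$ satisfies $C_i$. In particular the three conditions are automatically mutually exclusive, so no separate disjointness check is needed.

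For the sufficiency directions almost all the work is already done. In (3), the disjunct ``$R$ boolean and $R\not\simeq\mathbb{Z}_2^2$'' is Proposition 1.11, the disjunct ``$Z(R)$ not an ideal and $R$ neither boolean nor a subring of a product of two integral domains'' is Corollary 1.5, and the disjunct ``$Z(R)$ an ideal with $ann(x,y)=(0)$ for some pair'' is Proposition 1.8(2). In (2), the two disjuncts are exactly Proposition 1.10 and Proposition 1.8(1). In (1), if $R\simeq\mathbb{Z}_2^2$ a direct check gives $diam(\Gamma(R))=1$, and if $Z(R)^2=(0)$ then every product of two zero-divisors is $0$, so $\Gamma(R)$ is complete (using $|Z(R)^{\star}|>1$) and again $diam(\Gamma(R))=1$. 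These last two elementary remarks are the only genuinely new pieces.

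The substantive part is (b), which I would carry out as a case split governed by \cite{Groupe2}, Theorem 1.7. First dispose of $R\simeq\mathbb{Z}_2^2$, which satisfies (1). Assuming $R\not\simeq\mathbb{Z}_2^2$, invoke that dichotomy: if $\widetilde{\Gamma}(R)$ is not complete, then $Z(R)$ is not an ideal and $R$ is neither boolean nor a subring of a product of two integral domains, which is the middle disjunct of (3). If $\widetilde{\Gamma}(R)$ is complete, then $R$ is boolean, or a subring of a product of two integral domains, or $Z(R)$ is an ideal; the first possibility gives (3), the second gives (2). In the remaining subcase $Z(R)$ is an ideal, and I split on the square: if $Z(R)^2=(0)$ then (1) holds, while if $Z(R)^2\neq(0)$ then, depending on whether every distinct pair of zero-divisors has nonzero annihilator or some pair has $ann(x,y)=(0)$, we obtain the second disjunct of (2) or the third disjunct of (3). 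Thus every admissible $R$ meets at least one condition.

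The delicate point is not any single deep step but the bookkeeping of this case analysis, and in particular handling the boundary ring $\mathbb{Z}_2^2$: it is at once boolean and a subring of a product of two domains, so it must be removed at the very start in order that Propositions 1.10 and 1.11 apply to the remaining rings. It is reassuring (though, by the argument in the first paragraph, not logically required) to note that a boolean ring other than $\mathbb{Z}_2$ and $\mathbb{Z}_2^2$ has $Z(R)$ not an ideal and is not a subring of a product of two domains, and that a non-domain which is a subring of a product of two integral domains is reduced with $Z(R)$ not an ideal; these facts confirm that the three conditions genuinely do not overlap. Finally, Remark 1.9 shows that in the third disjunct of (3) one automatically has $Z(R)^2\neq(0)$, so that disjunct is consistent with the ``$Z(R)$ an ideal'' framework used throughout.
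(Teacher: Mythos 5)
Your proposal is correct and takes essentially the same approach as the paper: sufficiency of each condition via Propositions 1.8, 1.10, 1.11 and Theorem 1.4 (equivalently Corollary 1.5), combined with exhaustiveness of the case analysis governed by Theorem 1.7 of \cite{Groupe2}, splitting on whether $Z(R)$ is an ideal, $R$ is boolean, or $R$ embeds in a product of two integral domains. The only difference is presentational: you spell out the meta-logical step (sufficiency plus exhaustiveness yields the biconditionals and automatic mutual exclusivity), which the paper's proof leaves implicit.
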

\begin{proof}Suppose that $Z(R)$ is not an ideal and $R$ is neither boolean nor a subring of a product
of two integral domains. Then, according to theorem 1.7 \cite{Groupe2}, $\widetilde{\Gamma}(R)$
is not complete and thus, by theorem 1.4, $diam(\Gamma(R))=3$.\\
Suppose that $R$ is a boolean ring. It is obvious that if $R \simeq \mathbb{Z}_2^2$, then $(\Gamma(R))$
is complete. If $R\not \simeq \mathbb{Z}_2^2$, then, by proposition 1.11, $diam(\Gamma(R))=3$.\\
Suppose that $R$ is a subring of a product of two integral domains and $R\not \simeq \mathbb{Z}_2^2$,
then, by proposition 1.10, $diam(\Gamma(R))=2$.\\
Suppose that $Z(R)$ is an ideal of $R$: \\
It is obvious that if $Z(R)^2=(0)$, then $diam(\Gamma(R))=1$.\\
Suppose that $Z(R)^2\neq (0)$ then, by proposition 1.8,  if for each distinct pair of zero-divisors
 $x,y$, $ann(x,y)\neq (0)$, $diam(\Gamma(R))=2$. \\
 If $Z(R)$ is an ideal and there exist a  pair of zero-divisors
 $x,y$ such that $ann(x,y)=(0)$, then by proposition 1.8,  $diam(\Gamma(R))=3$. Also, we recall that, by remark
  1.9, $Z(R)^2\neq (0)$.
\end{proof}

We recall that $R$ is a McCoy ring (or satisfy the property A) (cf. \cite{Huck})
if each finitely generated ideal contained in $Z(R)$ has a nonzero annihilator.
\begin{corollary}Let $R$ be a McCoy ring.
\begin{enumerate}[(1)]
  \item $diam(\Gamma(R))=1$ if and only if $R \simeq \mathbb{Z}_2^2$ or  $Z(R)^2= (0)$.
  \item $diam(\Gamma(R))=2$ if and only if ($R$ is (up isomorphism) a subring of a product of two integral
  domains and $R\not \simeq \mathbb{Z}_2^2$) or ($Z(R)$ is an ideal, $Z(R)^2\neq (0))$.
  \item $diam(\Gamma(R))=3$  if and only if ($R$ is boolean and $R\not \simeq \mathbb{Z}_2^2$) or
  ($Z(R)$ is not an ideal and $R$ is neither boolean nor a subring of a product of two integral domains).
  \end{enumerate}
\end{corollary}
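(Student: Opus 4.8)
The plan is to deduce this corollary directly from Theorem 1.12 by using the McCoy hypothesis to eliminate all the annihilator side-conditions appearing there. Comparing the two statements, the only structural differences are that part (2) here drops the clause ``for each distinct pair of zero-divisors $x,y$, $ann(x,y)\neq(0)$'' from Theorem 1.12(2), and part (3) here loses the entire third disjunct ``$Z(R)$ is an ideal and there exist a pair of zero-divisors $x,y$ such that $ann(x,y)=(0)$'' from Theorem 1.12(3). Since part (1) is verbatim the same as Theorem 1.12(1), the whole corollary rests on a single observation about McCoy rings.

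The key observation I would establish first is: \emph{if $R$ is a McCoy ring and $Z(R)$ is an ideal, then $ann(x,y)\neq(0)$ for every pair of zero-divisors $x,y$.} Given $x,y\in Z(R)$, the ideal $(x,y)$ is finitely generated, and since $Z(R)$ is an ideal containing both generators we have $(x,y)\subseteq Z(R)$. The defining property of a McCoy ring then provides a nonzero annihilator of $(x,y)$, that is $ann(x,y)=(0:(x,y))\neq(0)$. This one step carries the entire content of the corollary beyond Theorem 1.12.

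With this in hand the three parts follow by inspection. Part (1) needs nothing further. For part (2), whenever $Z(R)$ is an ideal with $Z(R)^2\neq(0)$ the observation supplies the suppressed clause automatically, so the condition stated here is equivalent to the one in Theorem 1.12(2); the reverse implication is immediate since the stated condition is weaker. For part (3), the observation shows the third disjunct of Theorem 1.12(3) is vacuous for a McCoy ring (an ideal $Z(R)$ forbids any pair with $ann(x,y)=(0)$), so only the first two disjuncts survive, which are precisely those listed.

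I do not anticipate a genuine obstacle: this is a direct specialization. The one point requiring care is simply to confirm that the McCoy definition applies to the two-generated ideal $(x,y)$ and that ``$Z(R)$ is an ideal'' is exactly what places $(x,y)$ inside $Z(R)$; beyond that, no further ring-theoretic input is needed, and everything reduces to invoking Theorem 1.12 with the redundant clauses removed.
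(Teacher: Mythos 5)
Your proposal is correct and follows exactly the paper's own argument: the paper's proof consists precisely of your key observation, namely that when $Z(R)$ is an ideal and $x,y$ are zero-divisors, $(x,y)\subset Z(R)$ and the McCoy property forces $ann(x,y)\neq(0)$, after which Theorem 1.12 specializes to the corollary. The only difference is presentational — the paper leaves the part-by-part bookkeeping (the vacuity of the third disjunct in (3), the redundancy of the annihilator clause in (2)) implicit, whereas you spell it out.
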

\begin{proof} Suppose that $Z(R)$ is an ideal of $R$ such that $Z(R)^2\neq (0)$. Let
a distinct pair of zero-divisors $x,y$ so $(x,y)\subset Z(R)$ because $Z(R)$ is an ideal and since $R$ is a McCoy ring,
$ann(x,y)\neq 0$.
\end{proof}

\begin{lemma} $R$ is a noetherian boolean ring if and only if $R\simeq \mathbb{Z}_2^n$.
\end{lemma}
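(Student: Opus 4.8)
The plan is to prove the two implications separately, with the forward direction carrying essentially all of the content. For the easy implication, if $R\simeq \mathbb{Z}_2^{\,n}$ then $R$ is finite, hence trivially noetherian, and it is boolean because each factor $\mathbb{Z}_2$ is boolean and a finite product of boolean rings is again boolean.

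For the converse, assume $R$ is a noetherian boolean ring, and I would first record two purely boolean facts. Since every element satisfies $x^2=x$, one gets $x^k=x$ for all $k\geq 1$, so $R$ has no nonzero nilpotent elements and $Nil(R)=(0)$; thus $R$ is reduced. Next, for any prime ideal $\mathfrak{p}$ the quotient $R/\mathfrak{p}$ is a boolean integral domain, and in a domain the relation $x(x-1)=0$ forces $x\in\{0,1\}$, so $R/\mathfrak{p}\simeq\mathbb{Z}_2$ is a field. Consequently every prime of $R$ is maximal, $R$ has Krull dimension $0$, and the set of all primes of $R$ coincides with the set of its minimal primes.

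Now I would invoke the noetherian hypothesis: a noetherian ring has only finitely many minimal primes, so by the previous step $R$ has only finitely many primes, all maximal, say $\mathfrak{m}_1,\dots,\mathfrak{m}_n$. Distinct maximal ideals are pairwise comaximal, and their intersection $\bigcap_{i}\mathfrak{m}_i$ is the Jacobson radical of $R$; since every prime is maximal, this radical equals $Nil(R)$, which is $(0)$ by the first step. The Chinese Remainder Theorem then yields $R\simeq \prod_{i=1}^{n} R/\mathfrak{m}_i \simeq \mathbb{Z}_2^{\,n}$, which is the desired conclusion.

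The only non-elementary input is the finiteness of the set of minimal primes of a noetherian ring, and that is where I expect the main (though entirely standard) obstacle to lie; everything else is idempotent bookkeeping. An alternative route would replace the last two steps by the observation that Krull dimension $0$ together with the noetherian property makes $R$ artinian, hence a finite product of local rings, and a local boolean ring has no nontrivial idempotents, so each of its elements is $0$ or $1$ and each local factor is $\mathbb{Z}_2$.
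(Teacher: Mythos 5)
Your proof is correct, and its skeleton matches the paper's: reduce to finitely many maximal ideals with zero intersection, apply the Chinese Remainder Theorem, and identify each boolean field factor with $\mathbb{Z}_2$. The one genuine difference is the noetherian input you use to get finiteness. The paper argues: boolean $\Rightarrow \dim R = 0$, then noetherian plus dimension zero $\Rightarrow$ $R$ artinian, hence finitely many maximal ideals. You instead observe that every prime of a boolean ring is maximal (since $R/\mathfrak{p}$ is a boolean domain, hence $\mathbb{Z}_2$), so all primes are minimal, and then invoke the finiteness of the set of minimal primes of a noetherian ring. Your route avoids the Akizuki--Hopkins passage through artinian rings, which is arguably more elementary, and it also makes explicit why the intersection of the maximal ideals vanishes (Jacobson radical $=$ nilradical $=(0)$), a point the paper compresses into ``$R$ is reduced then $\bigcap_{i=1}^{n}\mathfrak{m}_i=(0)$''. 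The alternative you sketch at the end --- artinian, hence a finite product of local rings, each local boolean ring being $\mathbb{Z}_2$ --- is essentially the paper's proof (in a structure-theorem variant), so you have in effect given both arguments; either is complete.
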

\begin{proof}
$\Rightarrow)$ Since $R$ is boolean, then $\dim R=0$ so $R$ is artinian hence $R$ has a finite number of maximal  ideals $\mathfrak{m}_1,\dots, \mathfrak{m}_n$. Since $R$  is boolean, $R$ is reduced then $\bigcap\limits_{i=1}^{n}\mathfrak{m}_i=(0)$ so $R\simeq \prod\limits_{i=1}^{n} R/\mathfrak{m}_i$ therefore $R\simeq \mathbb{Z}_2^n$ because $R/\mathfrak{m}_i$ are boolean fields. The other implication is obvious.
\end{proof}

Since a noetherain ring is a McCoy ring (cf. theorem 82, \cite{Kap}), using the previous lemma,  we obtain:

\begin{corollary}Let $R$ a noetherian ring.
\begin{enumerate}[(1)]
  \item $diam(\Gamma(R))=1$ if and only if $R \simeq \mathbb{Z}_2^2$ or  $Z(R)^2= (0)$.
  \item $diam(\Gamma(R))=2$ if and only if ($R$ is (up isomorphism) a subring of a product of
  two integral domains and $R\not \simeq \mathbb{Z}_2^2$) or ($Z(R)$ is an ideal, $Z(R)^2\neq (0)$.
  \item $diam(\Gamma(R))=3$  if and only if ($R \simeq \mathbb{Z}_2^n$, with $n>2$) or ($Z(R)$ is not an ideal and $R$ is neither  $\mathbb{Z}_2^n$, with $n>2$ nor a subring of a product of two integral domains).
  \end{enumerate}
\end{corollary}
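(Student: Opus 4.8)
The plan is to obtain this result purely as a translation of Corollary~1.13, using the two facts recorded immediately above: that a noetherian ring is a McCoy ring (Kaplansky, Theorem 82), so that Corollary~1.13 applies verbatim to every noetherian $R$, and that by Lemma~1.14 a noetherian boolean ring is precisely a finite power $\mathbb{Z}_2^n$. No fresh graph-theoretic or ring-theoretic input is required; the content is entirely the substitution of $\mathbb{Z}_2^n$ for the word ``boolean,'' together with a careful accounting of the small exponents.

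First, for (1) and (2): since the word ``boolean'' does not occur in parts (1) and (2) of Corollary~1.13, these two statements carry over unchanged to the noetherian setting, and there is nothing to prove beyond invoking that $R$ is McCoy.

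The work is in part (3), where I would treat the two disjuncts of Corollary~1.13(3) separately. For the first disjunct, ``$R$ is boolean and $R\not\simeq\mathbb{Z}_2^2$,'' Lemma~1.14 rewrites ``$R$ boolean'' as ``$R\simeq\mathbb{Z}_2^n$ for some $n\geq 1$''; the standing assumption $|Z(R)^{\star}|>1$ eliminates $n=1$ (as $\mathbb{Z}_2$ is a field), and the hypothesis $R\not\simeq\mathbb{Z}_2^2$ eliminates $n=2$, leaving exactly ``$R\simeq\mathbb{Z}_2^n$ with $n>2$.'' For the second disjunct, ``$Z(R)$ is not an ideal and $R$ is neither boolean nor a subring of a product of two integral domains,'' Lemma~1.14 turns ``not boolean'' into ``not $\mathbb{Z}_2^n$ for any $n$''; I would then argue that, under the standing assumptions, excluding only the exponents $n>2$ is equivalent. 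Indeed $n=1$ is already forbidden by $|Z(R)^{\star}|>1$, while $\mathbb{Z}_2^2$, being the product $\mathbb{Z}_2\times\mathbb{Z}_2$ of two fields, is itself a product of two integral domains and hence is already discarded by the clause ``nor a subring of a product of two integral domains.'' Thus ``not $\mathbb{Z}_2^n$ for any $n$'' may be replaced by ``not $\mathbb{Z}_2^n$ with $n>2$'' without altering the class of rings described.

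The only delicate point, and the step I would verify most carefully, is precisely this last equivalence in the second disjunct: one must confirm that no ring is erroneously added or removed when the blanket exclusion of all boolean powers is weakened to the exclusion of $\mathbb{Z}_2^n$ with $n>2$ only. This reduces to the two observations just made, after which part (3) follows and the corollary is complete.
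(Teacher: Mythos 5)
Your proposal is correct and follows exactly the paper's own route: the paper derives this corollary in one line by citing that a noetherian ring is McCoy (Kaplansky, Theorem 82) so that Corollary 1.13 applies, and then using Lemma 1.14 to replace ``boolean'' by $\mathbb{Z}_2^n$. Your extra bookkeeping on the small exponents --- $n=1$ ruled out by the standing assumption $|Z(R)^{\star}|>1$, and $\mathbb{Z}_2^2$ already excluded in the second disjunct because it is itself a product of two integral domains --- is a detail the paper leaves implicit, and you verify it correctly.
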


Using theorem 2.4 \cite{Groupe}, we obtain when $R$ is a finite ring:
\begin{corollary} Let $R$ be a finite ring.
\begin{enumerate}[(1)]
  \item $diam(\Gamma(R))=1$ if and only if $R \simeq \mathbb{Z}_2^2$ or ($R$ is local and $\mathfrak{m}^2=(0)$).
  \item $diam(\Gamma(R))=2$ if and only if  $R$ is  a product of two fields or  ($R$ is local and
  $\mathfrak{m}^2\neq (0)$.
  \item $diam(\Gamma(R))=3$ if and only if ($R\not \simeq \mathbb{Z}_2^n$  and $R$ is
  neither  a product of two fields nor local) or ($R \simeq \mathbb{Z}_2^n$, with $n>2$).
\end{enumerate}
\end{corollary}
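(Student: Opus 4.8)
The plan is to deduce this directly from the Noetherian case already established in Corollary 1.15, since every finite ring is Noetherian, and then to rewrite each ring-theoretic hypothesis occurring there in the structural language of finite commutative rings. The only genuinely new ingredient is the dictionary translating the three conditions that govern completeness of $\widetilde{\Gamma}(R)$ — namely ``$Z(R)$ is an ideal'', ``$R$ is boolean'' and ``$R$ is a subring of a product of two integral domains'' — into their finite counterparts, and this is exactly what Theorem 2.4 of \cite{Groupe} is invoked to supply.

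First I would record the three translations for a finite ring $R$ with $|Z(R)^{\star}|>1$, writing $R$ as a finite product of finite local rings. One checks that $Z(R)$ is an ideal if and only if $R$ is local: if $R$ had at least two local factors, the idempotents $(1,0,\dots)$ and $(0,1,\dots,1)$ would be zero-divisors whose sum is a unit, so $Z(R)$ would not be additively closed, whereas in a finite local ring the maximal ideal $\mathfrak{m}$ is nilpotent and hence $Z(R)=\mathfrak{m}$. Next, a finite boolean ring is $\mathbb{Z}_2^n$ by Lemma 1.14 (finite $\Rightarrow$ Noetherian). Finally, a finite ring embeds in a product of two integral domains if and only if it is reduced with at most two minimal primes, i.e. (discarding the domain case, which has no nonzero zero-divisors) a product of two fields; here one uses that a finite domain is a field and that any homomorphism from a finite product of fields into a domain factors through a single projection, so an embedding into $D_1\times D_2$ forces at most two factors. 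These are precisely the statements packaged in Theorem 2.4 of \cite{Groupe}.

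With this dictionary in hand, each clause follows by substitution into Corollary 1.15. For (1), $Z(R)^2=(0)$ forces $Z(R)$ to be an ideal by Lemma 1.7(1), hence $R$ local with $Z(R)=\mathfrak{m}$ and $\mathfrak{m}^2=(0)$, and conversely; so ``$Z(R)^2=(0)$'' becomes ``$R$ local and $\mathfrak{m}^2=(0)$''. For (2), the first alternative of Corollary 1.15(2) becomes ``$R$ is a product of two fields'' and the second, ``$Z(R)$ an ideal with $Z(R)^2\neq(0)$'', becomes ``$R$ local with $\mathfrak{m}^2\neq(0)$''; note that the annihilator side-conditions of Theorem 1.12 have already disappeared in Corollary 1.15 because a finite ring is a McCoy ring (Theorem 82 of \cite{Kap}, used in Corollary 1.13). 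For (3) the boolean alternative becomes ``$R\simeq\mathbb{Z}_2^n$ with $n>2$'' and the remaining alternative becomes ``$R$ is not local and is neither $\mathbb{Z}_2^n$ nor a product of two fields''.

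The step I expect to require the most care is the boundary ring $\mathbb{Z}_2^2$: it is simultaneously a product of two fields and a boolean ring, yet $\Gamma(\mathbb{Z}_2^2)$ is complete, so $diam(\Gamma(\mathbb{Z}_2^2))=1$. Thus $\mathbb{Z}_2^2$ must be read as belonging to clause (1) and must be excluded from the ``product of two fields'' alternative of (2) and from the boolean alternative of (3). Verifying that the three clauses partition all finite rings with $|Z(R)^{\star}|>1$ — local rings split according as $\mathfrak{m}^2=(0)$ or $\mathfrak{m}^2\neq(0)$, and non-local rings split into products of two fields, the rings $\mathbb{Z}_2^n$ with $n>2$, and everything else — is then routine, and it is exactly in this bookkeeping that the placement of $\mathbb{Z}_2^2$ has to be made explicit.
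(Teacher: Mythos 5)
Your proposal is correct and follows essentially the same route as the paper, which likewise obtains this corollary by specializing the Noetherian case (Corollary 1.15) to finite rings and invokes Theorem 2.4 of \cite{Groupe} for precisely the dictionary you prove by hand ($Z(R)$ an ideal $\Leftrightarrow$ $R$ local, boolean $\Leftrightarrow$ $R\simeq\mathbb{Z}_2^n$, subring of a product of two integral domains $\Leftrightarrow$ product of two fields). Your care over $\mathbb{Z}_2^2$ is also warranted: as literally stated, clause (2) of the corollary would fail for $\mathbb{Z}_2^2$ (a product of two fields whose graph $\Gamma(R)$ is complete), so the exclusion you make explicit is genuinely needed and is only implicit in the paper.
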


\begin{corollary}Let $n>1$ a composite integer.
\begin{enumerate}[(1)]
  \item $diam(\Gamma(\mathbb{Z}_n))=0$ if and only if $n=4$.
  \item  $\Gamma(\mathbb{Z}_n)=1$  If and only if if $n=p^2$ with $p$ is an odd prime.
  \item $diam(\Gamma(\mathbb{Z}_n))=2$ if and only if $n=p^k$ with $k>2$ and $p$ is prime or
  $n$ is a product of two distinct primes.
  \item $diam(\Gamma(\mathbb{Z}_n))=3$ if and only if $n$ is neither a power of a prime number
  nor a  product of two distinct primes.
\end{enumerate}
\end{corollary}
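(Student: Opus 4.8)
The plan is to specialize the finite-ring characterization of Corollary 1.17 to the cyclic ring $R=\mathbb{Z}_n$, translating each ring-theoretic hypothesis there into an arithmetic condition on $n$. The essential dictionary is supplied by the Chinese Remainder Theorem: writing $n=p_1^{a_1}\cdots p_r^{a_r}$, one has $\mathbb{Z}_n\simeq \prod_{i=1}^r \mathbb{Z}_{p_i^{a_i}}$, and each factor $\mathbb{Z}_{p_i^{a_i}}$ is a local ring that is a field precisely when $a_i=1$.

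First I would dispose of the degenerate case (1). Since $\mathbb{Z}_n$ is a finite ring with $|Z(R)^{\star}|=1$ exactly when $\mathbb{Z}_n\simeq\mathbb{Z}_4$ (the other possibility $\mathbb{Z}_2[X]/(X^2)$ from Theorem 1.1 has non-cyclic additive group and so is not a $\mathbb{Z}_n$), Theorem 1.1(1) gives $diam(\Gamma(\mathbb{Z}_n))=0$ if and only if $n=4$.

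For the remaining cases I would apply Corollary 1.17 after establishing two reductions. (a) $\mathbb{Z}_n$ is local if and only if $r=1$, i.e.\ $n=p^k$ is a prime power, in which case the maximal ideal is $\mathfrak{m}=(p)$ and $\mathfrak{m}^2=(p^2)$; since $n$ composite forces $k\geq 2$, we get $\mathfrak{m}^2=(0)$ in $\mathbb{Z}_{p^k}$ iff $k=2$ and $\mathfrak{m}^2\neq(0)$ iff $k>2$. (b) By the CRT dictionary, $\mathbb{Z}_n$ is a product of two fields if and only if $r=2$ and $a_1=a_2=1$, i.e.\ $n=pq$ is a product of two distinct primes. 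With these, Corollary 1.17(1) reads $diam=1$ iff $n=p^2$; ruling out $n=4$ (which gives $diam=0$) leaves exactly $n=p^2$ with $p$ odd, giving (2). Corollary 1.17(2) reads $diam=2$ iff $\mathbb{Z}_n$ is a product of two fields (so $n=pq$) or $\mathbb{Z}_n$ is local with $\mathfrak{m}^2\neq(0)$ (so $n=p^k$, $k>2$), giving (3). Finally Corollary 1.17(3) reads $diam=3$ iff $\mathbb{Z}_n$ is neither local nor a product of two fields, together with the clause about $\mathbb{Z}_2^m$; since $\mathbb{Z}_n$ is cyclic it is never isomorphic to $\mathbb{Z}_2^m$ for $m\geq 2$, so that clause is vacuous and the condition reduces to ``$n$ is neither a prime power nor a product of two distinct primes,'' giving (4).

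The computation is routine once the dictionary is in place; I expect the only points requiring care to be (i) the boundary between cases (1) and (2), namely that $n=p^2$ yields $diam=1$ exactly for odd $p$ while $n=4$ is absorbed into case (1), and (ii) justifying that the $R\simeq\mathbb{Z}_2^m$ alternative in Corollary 1.17(3) cannot occur for a cyclic ring, so that it contributes nothing to case (4). Neither is a genuine obstacle; the whole statement is simply a transcription of Corollary 1.17 through the CRT decomposition of $\mathbb{Z}_n$.
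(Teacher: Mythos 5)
Your proposal is correct and follows exactly the route the paper intends: the paper states this corollary immediately after its finite-ring characterization (Corollary 1.16 in the paper's numbering, not 1.17) with no written proof, precisely because it is the CRT specialization you carry out, and your careful handling of the $n=4$ boundary and of the fact that no $\mathbb{Z}_n$ is isomorphic to $\mathbb{Z}_2^m$ or $\mathbb{Z}_2[X]/(X^2)$ (non-cyclic additive groups) fills in the details the paper leaves implicit.
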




\section{examples}
in this section, we give examples of the situations described in the  theorem.
We begin by giving an example where $diam(\Gamma(R))=1$.

\begin{example}Let $R=\mathbb{R}[X]/(X^2)$. It is obvious
  that $Z(R)=(X+(X^2))$ and $Z(R)^2=(0)$ then $diam(\Gamma(R))=1$.
\end{example}

For the case where $diam(\Gamma(R))=2$, we give the following two examples:
\begin{example}Let $R=\mathbb{Z}^2$ so $diam(\Gamma(R))=2$.
\end{example}
\begin{example}Let $R=k[X,Y]/(X^2,XY)$. It is obvious that the $Z(R)=(X+(X^2,XY),Y+(X^2,XY))$
is an ideal of $R$ and since $Y+(X^2,XY)\in Z(R)$ and  $(Y+(X^2,XY))^2\neq 0$ in $R$, $Z(R)^2\neq (0)$.
Also $R$ is noetherian so, by corollary 1.14, $diam(\Gamma(R))=2$.
\end{example}

For the case where $diam(\Gamma(R))=3$, we give also the following three examples:
\begin{example}Let $R=\mathbb{Z}_{2}^{n}$, where $n>2$ is an integer, so $R$ is boolean then $diam(\Gamma(R))=3$.
\end{example}
\begin{example}Let $R=\mathbb{Z}^3$. It is obvious that $Z(R)$ is not an ideal and $R$ is neither
boolean nor a subring of a product of two integral domains hence $diam(\Gamma(R))=3$.
\end{example}
\begin{example}
As in \cite{Lucas}, we will use a variation of the construction $"A+B"$ described in
\cite{Huck} and \cite{Ander} to give an example of a ring $R$ such that $Z(R)$ is  an ideal and
there exist a  pair of zero-divisors $r,s$ such that $ann(r,s)=0$ (then by remark 1.9, $Z(R)^2\neq (0)$):
 Let $M$ the maximal ideal of $A=k[X,Y]_{(X,Y)}$ and
$\mathcal{P}=\{\mathfrak{p}_{\alpha}/\alpha\in \Gamma\}$ the set of height one primes of $A$.  For
every $i=(\alpha,n)\in I=\Gamma \times \mathbb{N}$, let $\mathfrak{p}_i=\mathfrak{p}_{\alpha}$ and
$M_i=M/\mathfrak{p}_i$. It is obvious that $B=\bigoplus\limits_{i\in I} M_i$ is a non-unital ring and
is a unitary $A$-module. As in theorem 2.1 \cite{Ander}, define on $R=A\times B$:
$(a,x)+(b,y)=(a+b,x+y)$ and $(a,x)(b,y)=(ab,ay+bx+xy)$ then $R$ is a commutative ring
    with identity $1_R=(1,0)$ and is noted $R=A+B$.\\
    We claim that $Z(R)=\{(m,b)/m\in M, b\in B\}$ and consequently $Z(R)$ is an ideal:
    Let $(a,x)\in R$ such that $a\notin M$ and $x=(x_i+\mathfrak{p}_i)_{i\in I}\in B$ so
    $\forall m\in M$, $a+m\not\in in M$ and since $A$ is local and $M$ is
    the maximal ideal of $A$, $a+m$ is unit in $A$. For every $i\in I$,
    let $y_i=-a^{-1}(a+x_i)^{-1}x_i$ so $y=(y_i+\mathfrak{p}_i)\in B$ and we have $(a,x)(a^{-1},y)=1_R$
    hence $(a,x)\not \in Z(R)$.
    Conversely, let $(a,x)\in R$ such that $a \in M$ and $x=(x_i+\mathfrak{p}_i)_{i\in I}\in B$.
    It follows from the Krull's principal ideal theorem   that there exist
    $\beta \in \Gamma$ such that $a\in \mathfrak{p}_{\beta}$ so there exist $j\in I$
    such that $a\in \mathfrak{p}_j$ and $x\in \mathfrak{p}_j$ (because $\{i\in I/a\in \mathfrak{p}_i\}$
    is infinite and $\{i\in I/x_i\not \in \mathfrak{p}_i\}$ is finite). Let $v\in M\setminus \mathfrak{p}_j$
    and $y_i=\left\lbrace \begin{array}{c}
v\ \text{si}\ i=j \\
0 \ \text{si}\ i\neq j
\end{array}
\right.$ so $y=(y_i+\mathfrak{p}_j)\in B\setminus\{0\}$ and $(a,x)(0,y)=0$ thus $(a,x)\in Z(R)$.\\
Also, we claim that there exist $(r,s)\in Z(R)^2$ such that $ann(r,s)=(0)$: let $r=(X,0)$ and
$s=(Y,0)$. If  $(a,x)\in  ann(r)\cap ann(s)$, where $a\in A$ and $x=(x_i+\mathfrak{p}_i)_{i\in I}\in B$,
so $r(a,x)=s(a,x)=0$ then
$a=0$ and $\forall i\in I$, $Xx_i\in \mathfrak{p}_i$ and $Yx_i\in \mathfrak{p}_i$ then $\forall i\in I$,
$x_i\in \mathfrak{p}_i$, if not $\exists j\in I$ such that $x_j\not\in \mathfrak{p}_j$ so
$M=(X,Y)= \mathfrak{p}_j$, contradiction, because $ht(M)=2$. Thus $a=0$ and $x=0$.\\
By the previous theorem, we obtain $diam(\Gamma(R))=3$.

\end{example}

\section{ diameter of $\Gamma (R[X])$}

Lucas  gave a following  characterization of the diameter of $\Gamma (R[X])$ (see theorem 3.4,
 \cite{Lucas} ):
\begin{theorem}Let $R$ be a ring.
\begin{enumerate}[(1)]
  \item $diam (R[X])\geq 1$.
  \item $diam (R[X])=1$ if and only if $R$ is a nonreduced ring such that $Z(R)^2= (0)$.
  \item $diam (R[X])=2$ if and only if either (i) $R$ is a reduced ring with exactly two minimal
primes, or (ii) $R$ is a McCoy ring and $Z(R)$ is an ideal with $Z(R)^2\neq (0)$.
  \item $diam (R[X])=3$ if and only if $R$ is not a reduced ring with exactly two minimal
primes and either $R$ is not a McCoy ring or $Z(R)$ is not an ideal.
\end{enumerate}
\end{theorem}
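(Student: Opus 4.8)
The plan is to obtain the entire statement by applying the characterization of $diam(\Gamma(S))$ for McCoy rings (Corollary 1.13) to $S=R[X]$, and then translating every hypothesis about $R[X]$ into a hypothesis about $R$. The one structural fact that legitimizes this is that $R[X]$ is \emph{always} a McCoy ring, and the single tool throughout is McCoy's theorem: $f\in Z(R[X])$ if and only if $cf=0$ for some $c\in R\setminus\{0\}$. To see that $R[X]$ is McCoy I would take $f_1,\dots,f_n\in Z(R[X])$ generating an ideal contained in $Z(R[X])$ and form $g=\sum_i X^{M_i}f_i$ with exponents $M_i$ so large that the supports of the summands are pairwise disjoint; then $g$ lies in that ideal, hence in $Z(R[X])$, so McCoy's theorem supplies $0\neq c\in R$ with $cg=0$, and disjointness of supports forces $cf_i=0$ for every $i$. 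Thus every finitely generated ideal of zero-divisors of $R[X]$ is annihilated by a nonzero constant. I would also dispose of the degenerate cases that never occur for $R[X]$: being infinite it is not $\mathbb{Z}_2^2$, carrying the non-idempotent $X$ it is not boolean, and since $R[X]\not\simeq\mathbb{Z}_4,\mathbb{Z}_2[X]/(X^2)$ we get $diam(\Gamma(R[X]))\neq 0$; as $R$ has a nonzero zero-divisor, $\Gamma(R[X])$ is nonempty, so $diam(\Gamma(R[X]))\in\{1,2,3\}$, which is item (1).

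Next I would build the dictionary for the square condition. I claim $Z(R[X])^2=(0)$ if and only if $Z(R)^2=(0)$. For the forward direction, constants give $Z(R)\subseteq Z(R[X])$, and for $a\in Z(R)^{\star}$ with $ab=0$ one also has $aX\in Z(R[X])$, so $a^2X=0$ forces $a^2=0$ while $ab=0$ gives $Z(R)^2=(0)$. For the converse, if $cf=0$ and $dg=0$ with $c,d\neq 0$, then every coefficient of $f$ and of $g$ lies in $Z(R)$, so each coefficient of $fg$ is a sum of products from $Z(R)^2=(0)$, whence $fg=0$. Since $Z(R)^2=(0)$ together with $Z(R)^{\star}\neq\emptyset$ forces $R$ nonreduced, Corollary 1.13(1) applied to $R[X]$ (with $\mathbb{Z}_2^2$ excluded) yields exactly item (2).

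The heart of the argument, and the step I expect to be the main obstacle, is the equivalence: $Z(R[X])$ is an ideal of $R[X]$ if and only if $R$ is a McCoy ring and $Z(R)$ is an ideal of $R$, in which case $Z(R[X])=Z(R)[X]$. For the forward direction I would contract to $R$, using $Z(R[X])\cap R=Z(R)$ to conclude that $Z(R)$ is an ideal; and given $a_0,\dots,a_n$ generating an ideal inside $Z(R)$, the polynomial $\sum_i a_iX^i$ lies in the ideal $Z(R[X])$, so McCoy's theorem produces a nonzero $c$ with $ca_i=0$ for all $i$, proving $R$ is McCoy. For the converse, McCoy's theorem shows every zero-divisor of $R[X]$ has all coefficients in $Z(R)$, while the McCoy property applied to $(a_0,\dots,a_n)\subseteq Z(R)$ shows conversely that any polynomial with coefficients in $Z(R)$ is killed by a nonzero constant; hence $Z(R[X])=Z(R)[X]$, which is visibly an ideal, and in this situation $Z(R[X])^2\neq(0)$ if and only if $Z(R)^2\neq(0)$. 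This is precisely where Lucas's McCoy hypothesis on $R$ is manufactured out of the ideal condition on $Z(R[X])$.

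Finally I would translate the last condition: $R[X]$ is (up to isomorphism) a subring of a product of two integral domains if and only if $R$ is reduced with exactly two minimal primes. This uses that $R[X]$ is reduced exactly when $R$ is, that the minimal primes of $R[X]$ are precisely the $\mathfrak{p}[X]$ with $\mathfrak{p}$ minimal in $R$, and the standard fact that a ring embeds in a product of two domains exactly when it is reduced with at most two minimal primes. Assembling: applying Corollary 1.13 to $R[X]$, the alternative \emph{$R[X]$ a subring of a product of two domains} becomes \emph{$R$ reduced with exactly two minimal primes}, the alternative \emph{$Z(R[X])$ an ideal with $Z(R[X])^2\neq(0)$} becomes \emph{$R$ McCoy, $Z(R)$ an ideal, $Z(R)^2\neq(0)$}, and the boolean alternative drops out entirely. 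Reading off the three diameters then reproduces items (2), (3), and (4), the last one obtained by negating the two translated conditions ($R$ not reduced with exactly two minimal primes, and either $R$ not McCoy or $Z(R)$ not an ideal).
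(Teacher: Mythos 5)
Your proof is correct, but note that the paper itself never proves this statement: it is quoted verbatim from Lucas as the benchmark result, and what the paper proves instead is its own Theorem 3.3, an equivalent characterization phrased with ``subring of a product of two integral domains'' in place of ``reduced with exactly two minimal primes.'' Your argument is exactly the strategy of that Theorem 3.3 --- apply Corollary 1.13 to the McCoy ring $R[X]$, kill the degenerate alternatives ($R[X]$ is neither boolean nor $\mathbb{Z}_2^2$ nor of diameter $0$), and translate the remaining conditions down to $R$ --- so in substance you have reconstructed the paper's Section 3. The differences lie in what is proved versus cited: the paper cites Huckaba (Theorem 2.7) for $R[X]$ being McCoy, where you give the disjoint-support argument with $g=\sum_i X^{M_i}f_i$; it cites Lemmas 1.9/1.10 of \cite{Groupe2} for the pivotal equivalence ``$Z(R[X])$ is an ideal if and only if $R$ is McCoy and $Z(R)$ is an ideal,'' where you supply the contraction argument ($Z(R[X])\cap R=Z(R)$) together with McCoy's classical theorem; and its Lemma 3.2 is your square-condition dictionary. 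Finally, you carry out a step the paper leaves entirely implicit: converting ``$R[X]$ embeds in a product of two integral domains'' into Lucas's ``$R$ is reduced with exactly two minimal primes,'' via the correspondence between minimal primes of $R$ and of $R[X]$ and the fact that a reduced non-domain embeds in a product of two domains if and only if it has exactly two minimal primes. This last translation is precisely what is needed to recover Lucas's formulation from the paper's Theorem 3.3, so your write-up is both self-contained and, on this point, more complete than the paper, which never formally reconciles its Theorem 3.3 with the statement at hand.
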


In this section, we will use the results of the study of the graph
$\widetilde{\Gamma}(R[X])$ \cite{Groupe2} to approach the same problem.
We recall that $R[X]$ is a McCoy ring (cf. Theorem 2.7, \cite{Huck}).
We note also  that $R[X]$ is not boolean and if $R$ is not an integral domain, then $|Z(R[X])|>2$.\\

\begin{lemma} $Z(R[X])^2=0$ if and only if $Z(R)^2=0$.
\end{lemma}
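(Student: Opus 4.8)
The plan is to prove both implications directly, resting everything on the classical description of the zero-divisors of a polynomial ring. Recall (McCoy's theorem, consistent with $R[X]$ being a McCoy ring as noted just above the statement) that a polynomial $f=\sum_i a_iX^i\in R[X]$ lies in $Z(R[X])$ if and only if there is a nonzero $c\in R$ with $cf=0$. In that case $ca_i=0$ for every $i$, so, since $0\in Z(R)$, every coefficient of a zero-divisor of $R[X]$ already belongs to $Z(R)$. This coefficientwise description is the one nontrivial ingredient; once it is in hand, each direction collapses to a short coefficient computation.

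For the reverse implication, I would assume $Z(R)^2=(0)$ and take arbitrary $f=\sum_i a_iX^i$ and $g=\sum_j b_jX^j$ in $Z(R[X])$. By the remark above, every $a_i$ and every $b_j$ lies in $Z(R)$, so each product $a_ib_j$ vanishes by hypothesis. The coefficient of $X^k$ in $fg$ is $\sum_{i+j=k}a_ib_j$, which is therefore $0$ for all $k$; hence $fg=0$. As $f,g$ were arbitrary zero-divisors, this yields $Z(R[X])^2=(0)$.

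For the forward implication, I would assume $Z(R[X])^2=(0)$ and let $x,y\in Z(R)$. A zero-divisor of $R$ stays a zero-divisor of $R[X]$, since a nonzero annihilator $c\in R$ of $x$ (resp. $y$) still satisfies $cx=0$ (resp. $cy=0$) in $R[X]$; thus $x,y\in Z(R[X])$ when viewed as constant polynomials. The hypothesis then forces $xy=0$, and since $x,y$ were arbitrary elements of $Z(R)$ we conclude $Z(R)^2=(0)$.

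The main (and only) point requiring care is the inclusion ``$f\in Z(R[X])\Rightarrow$ all coefficients of $f$ lie in $Z(R)$'', i.e.\ the application of McCoy's theorem; everything else is a one-line verification, so I do not anticipate any genuine obstacle. It is worth noting that the forward direction does not even need McCoy, only that $R\hookrightarrow R[X]$ preserves zero-divisors, whereas the reverse direction is exactly where the polynomial structure of $Z(R[X])$ is used.
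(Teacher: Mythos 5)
Your proof is correct and follows essentially the same route as the paper: the forward direction via the inclusion $Z(R)\subset Z(R[X])$, and the reverse direction via the coefficientwise description $Z(R[X])\subset (Z(R))[X]$, which the paper cites as an exercise in Atiyah--Macdonald and which is precisely the consequence of McCoy's theorem you spell out. The only difference is that you unpack the cited inclusion and the coefficient computation explicitly rather than quoting them.
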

\begin{proof}(1) Since $ Z(R)\subset Z(R[X])$, if $Z(R[X])^2=0$ then $Z(R)^2=0$.
Conversely, since $Z(R[X])\subset (Z(R))[X]$ (cf. Exercise 2, iii), page 13, \cite{A}), if $Z(R)^2=0$, then $Z(R[X])^2=0$.
\end{proof}

Using corollary 1.13 and the previous lemma, we obtain:

\begin{theorem}Let $R$ a ring such that $R$ is not an integral domain.
\begin{enumerate}[(1)]
  \item  $diam(\Gamma(R[X]))=1$ if and only if  $Z(R)^2= (0)$.
  \item $diam(\Gamma(R[X]))=2$ if and only if ($R$ is (up isomorphism) a subring of a product
  of two integral domains   or ($R$ is a McCoy ring and $Z(R)$ is an ideal such that $Z(R)^2\neq (0)$).
  \item $diam(\Gamma(R[X]))=3$  if and only if   ($R$ is not a McCoy ring or
  $Z(R)$ is not an ideal) and $R$ is not a subring of a product of two integral domains.
\end{enumerate}
\end{theorem}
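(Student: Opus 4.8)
The plan is to apply Corollary 1.13 to the ring $R[X]$ and then translate each condition on $R[X]$ back into a condition on $R$. Three features of $R[X]$ make the application clean. First, $R[X]$ is a McCoy ring (recalled just before Lemma 3.2), so Corollary 1.13 applies to it verbatim. Second, $R[X]$ is infinite (the powers of $X$ are distinct) and is not boolean (since $X^2 \neq X$), so $R[X] \not\simeq \mathbb{Z}_2^2$ and $R[X]$ is never boolean; consequently every clause "$\not\simeq \mathbb{Z}_2^2$" in Corollary 1.13 is automatically true and may be dropped, and the boolean disjunct in part (3) is vacuous. Moreover, since $R$ is not a domain we have noted $|Z(R[X])| > 2$, so $R[X]$ falls under the blanket hypotheses and $diam(\Gamma(R[X])) \in \{1,2,3\}$.

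Granting these reductions, part (1) is immediate: Corollary 1.13(1) applied to $R[X]$ gives $diam(\Gamma(R[X])) = 1$ if and only if $Z(R[X])^2 = (0)$, and Lemma 3.2 rewrites the right-hand side as $Z(R)^2 = (0)$.

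The substance of the argument is two translation lemmas feeding into parts (2) and (3). The first is
\[
R[X] \text{ is a subring of a product of two integral domains} \iff R \text{ is.}
\]
I would prove this through the equivalent description "reduced with exactly two minimal primes" (the form in which this condition enters Lucas's Theorem 1.1): $R[X]$ is reduced if and only if $R$ is, and the minimal primes of $R[X]$ are exactly the extensions $\mathfrak{p}[X]$ of the minimal primes $\mathfrak{p}$ of $R$, so the two rings have the same number of minimal primes. The second, and the step I expect to be the main obstacle, is
\[
Z(R[X]) \text{ is an ideal} \iff R \text{ is McCoy and } Z(R) \text{ is an ideal.}
\]
For "$\Leftarrow$" I would upgrade the inclusion $Z(R[X]) \subseteq (Z(R))[X]$ (the exercise cited in Lemma 3.2) to an equality: a polynomial with all coefficients in $Z(R)$ generates a finitely generated ideal inside $Z(R)$, which the McCoy hypothesis annihilates by a single nonzero element, so the polynomial is a zero-divisor; since $Z(R)$ is an ideal, $(Z(R))[X]$ is an ideal, whence $Z(R[X]) = (Z(R))[X]$ is one too. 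For "$\Rightarrow$" I would first note $Z(R) = Z(R[X]) \cap R$, so $Z(R)$ inherits the ideal property from $Z(R[X])$; and to recover the McCoy property, given a finitely generated $I = (a_1,\dots,a_n) \subseteq Z(R)$ I would form $f = \sum_{i} a_i X^{\,i-1}$, observe that $f \in Z(R[X])$ because the ideal $Z(R[X])$ contains each $a_i$, and then read off a common nonzero annihilator of the $a_i$ from McCoy's theorem applied in $R[X]$.

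Finally I would assemble the pieces. Substituting the two translation lemmas together with Lemma 3.2 into Corollary 1.13(2) for $R[X]$ yields exactly the characterization in part (2), the "$\not\simeq \mathbb{Z}_2^2$" rider disappearing by the first paragraph. Part (3) then follows either by the same substitution into Corollary 1.13(3) — the boolean disjunct being vacuous and "$Z(R[X])$ is not an ideal" negating the second translation lemma — or, more quickly, by complementation, since $diam(\Gamma(R[X]))$ takes only the values $1$, $2$, $3$. The only genuinely nontrivial point is the equivalence "$Z(R[X])$ is an ideal $\iff$ $R$ is McCoy and $Z(R)$ is an ideal"; everything else is either cited or a one-line structural observation.
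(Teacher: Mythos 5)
Your proposal is correct and takes essentially the same route as the paper: apply Corollary 1.13 to $R[X]$ (legitimate since $R[X]$ is a McCoy ring), discard the $\mathbb{Z}_2^2$ and boolean clauses because $R[X]$ can never satisfy them, and translate the remaining conditions back to $R$ via Lemma 3.2, the equivalence for subrings of products of two integral domains, and the equivalence ``$Z(R[X])$ is an ideal $\iff$ $R$ is McCoy and $Z(R)$ is an ideal.'' The only divergence is that the paper outsources this last equivalence to lemmas 1.9/1.10 of \cite{Groupe2}, whereas you prove it directly (using the classical McCoy theorem that a zero-divisor polynomial is killed by a nonzero constant), which makes your argument self-contained where the paper relies on a citation.
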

\begin{proof}
\begin{enumerate}[(1)]
  \item The result is a consequence of the lemma 3.2 and the fact that $R[X]\not \simeq  \mathbb{Z}_2^2$.
  \item By corollary 1.13, $diam(\Gamma(R[X]))=2$ if and only if $R[X]$ is (up isomorphism) a subring of
  a product of two integral domains and $R[X]\not \simeq \mathbb{Z}_2^2$) or ($Z(R[X])$ is an ideal,
  $Z(R[X])^2\neq (0)$. It is obvious that $R[X]$ is (up isomorphism) a subring of a product of
  two integral domains if and only if $R$ is (up isomorphism) a subring of a product of two integral domains.
  On the other hand, by lemma 1.10 \cite{Groupe2}, $Z(R[X])$ is an ideal such that $Z(R[X])^2\neq (0)$
  if and only if $R$ is a McCoy ring and $Z(R)$ is an ideal such that $Z(R)^2\neq (0)$.
  \item Also, by corollary 1.13, $diam(\Gamma(R[X]))=3$  if and only if ($R[X]$ is boolean and
  $R[X]\not \simeq \mathbb{Z}_2^2$) or ($Z(R[X])$ is not an ideal and $R[X]$ is neither boolean
  nor a subring of a product of two integral domains).
  It is obvious that $R[X]$ is not boolean and $R[X]$ is not a subring of a product
  of two integral domains if and only if $R$ is not a subring of a product of two integral domains.
  Also, by lemma 1.10 \cite{Groupe2}, $Z(R[X])$ is not an ideal if and only if $R$ is not a
  McCoy ring or $Z(R)$ is not an ideal.
\end{enumerate}
\end{proof}

\begin{remark}We recall that, by lemma 1.9 \cite{Groupe2}, $Z(R[X])$ is an ideal of $R[X]$ if and only if
$Z(R)$ is an ideal of $R$ and $R$ is a McCoy ring if and only if for any ideal $I$  of $R$ generated by a
finite number of zero-divisors, $ann(I)\neq (0)$.\\
If $R$ is noetherian so $R$ is a McCoy ring, then $Z(R[X])$ is an ideal of $R[X]$ if and only if
$Z(R)$ is an ideal of $R$.
\end{remark}

 \begin{corollary}Let $R$ a noetherian ring such that $R$ is not an integral domain.
 \begin{enumerate}[(1)]
  \item  $diam(\Gamma(R[X]))=1$ if and only if  $Z(R)^2= (0)$.
  \item $diam(\Gamma(R[X]))=2$ if and only if ($R$ is (up isomorphism) a subring of a product of two integral domains   or ($Z(R)$ is an ideal and $Z(R)^2\neq (0)$).
  \item $diam(\Gamma(R[X]))=3$  if and only if   $Z(R)$ is not an ideal and $R$ is neither boolean nor a subring of a product of two integral domains.
\end{enumerate}
 \end{corollary}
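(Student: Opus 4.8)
The plan is to derive all three equivalences directly from Theorem 3.3, the sole additional input being that a noetherian ring is automatically a McCoy ring (Theorem 82 of \cite{Kap}, as recorded in Remark 3.4). Since $R$ is assumed not to be an integral domain, the hypotheses of Theorem 3.3 are met, and the effect of noetherianity is to erase every appearance of the McCoy condition in its three clauses.

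Statement (1) requires no work: the equivalence $diam(\Gamma(R[X]))=1 \iff Z(R)^2=(0)$ of Theorem 3.3(1) contains no McCoy hypothesis and so carries over verbatim. For (2) I would start from the two disjuncts of Theorem 3.3(2); the first, that $R$ is a subring of a product of two integral domains, is left alone, while the second, ``$R$ is a McCoy ring and $Z(R)$ is an ideal with $Z(R)^2\neq(0)$,'' loses its McCoy clause because $R$ is noetherian, leaving precisely ``$Z(R)$ is an ideal with $Z(R)^2\neq(0)$.''

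The one clause that demands care is (3). Theorem 3.3(3) characterizes diameter $3$ by the conjunction ``($R$ is not a McCoy ring or $Z(R)$ is not an ideal) and $R$ is not a subring of a product of two integral domains.'' As $R$ is now a McCoy ring, the alternative ``$R$ is not a McCoy ring'' is false, so the inner disjunction collapses to ``$Z(R)$ is not an ideal,'' and the derived condition is simply that $Z(R)$ is not an ideal and $R$ is not a subring of a product of two integral domains. I expect the main obstacle to be reconciling this with the boolean wording in the statement. Here I would appeal to Lemma 1.12, which identifies the noetherian boolean rings as exactly the $\mathbb{Z}_2^n$, and then check that these are already sorted by the two surviving conditions: $\mathbb{Z}_2^2$ is a product of two fields, hence a subring of a product of two integral domains, so it is excluded from (3) and falls under (2); whereas for $n>2$ the ring $\mathbb{Z}_2^n$ has more than two minimal primes and so is not such a subring, while its set of zero-divisors is not an ideal, placing it correctly in (3). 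Thus the boolean rings need no separate treatment, and I would emphasize that the clause ``$R$ is not boolean'' must not be read so as to remove the rings $\mathbb{Z}_2^n$ with $n>2$, which genuinely have $diam(\Gamma(R[X]))=3$.
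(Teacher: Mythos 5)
Your proposal is correct and is essentially the paper's own (implicit) argument: the corollary is stated without proof immediately after Remark 3.4 records that a noetherian ring is a McCoy ring, the intended derivation being exactly your specialization of Theorem 3.3 with the McCoy clauses deleted, and parts (1) and (2) come out verbatim. Your care with part (3), however, uncovers a defect in the printed statement rather than in your proof: what Theorem 3.3 actually yields is ``$Z(R)$ is not an ideal and $R$ is not a subring of a product of two integral domains,'' and the extra clause ``neither boolean'' in the paper's wording makes the printed biconditional false. By Lemma 1.12 the noetherian boolean rings that are not integral domains are the $\mathbb{Z}_2^n$ with $n\geq 2$; for $n>2$ such a ring has $Z(R)$ not an ideal and more than two minimal primes (hence is not a subring of a product of two integral domains), so $diam(\Gamma(\mathbb{Z}_2^n[X]))=3$, yet it is boolean --- thus the forward implication of the printed (3) fails, and worse, the printed (1), (2), (3) together would assign such a ring no diameter at all. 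The exclusion is a spurious carryover from Corollary 1.14, where it is harmless only because the separate disjunct ``$R\simeq \mathbb{Z}_2^n$, with $n>2$'' appears there; your derived form, which drops the boolean clause, is the correct statement, and your verification that $\mathbb{Z}_2^2$ lands in case (2) while $\mathbb{Z}_2^n$, $n>2$, lands in case (3) completes the argument.
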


 \begin{corollary}Let $R$ a finite ring such that  $R$ is not a field.
 \begin{enumerate}[(1)]
  \item  $diam(\Gamma(R[X]))=1$ if and only if  $R$ is local and $\mathfrak{m}^2=(0)$.
  \item $diam(\Gamma(R[X]))=2$ if and only if $R$ is  a product of two fields or
  ($R$ is local and $\mathfrak{m}^2\neq (0)$.
  \item $diam(\Gamma(R[X]))=3$  if and only if   $R$ is not local and $R$ is not
   a product of two fields.
\end{enumerate}
 \end{corollary}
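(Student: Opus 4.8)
The plan is to combine Theorem 3.3 with the structure theory of finite commutative rings, so the first thing I would record is two reductions. Since a finite integral domain is a field, the hypothesis that $R$ is not a field is equivalent to saying $R$ is not an integral domain, so Theorem 3.3 applies. Moreover a finite ring is noetherian, hence a McCoy ring, so in each clause of Theorem 3.3 the alternative ``$R$ is not a McCoy ring'' is false and may be dropped. It then remains to rewrite the two surviving ring-theoretic conditions --- ``$Z(R)$ is an ideal with $Z(R)^2\neq(0)$'' and ``$R$ is (up to isomorphism) a subring of a product of two integral domains'' --- in the local-ring language of the statement, using the decomposition $R\simeq R_1\times\cdots\times R_n$ with each $R_i$ a finite local ring.

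Next I would prove the two needed dictionary entries. \emph{First:} $Z(R)$ is an ideal if and only if $R$ is local, in which case $Z(R)=\mathfrak{m}$ and $Z(R)^2=\mathfrak{m}^2$. Indeed, if $n\geq 2$, then choosing $x=(1,0,\dots,0)$ and $y=(0,u_2,\dots,u_n)$ with each $u_i$ a unit of $R_i$ gives two zero-divisors whose sum is a unit, so $Z(R)$ is not closed under addition; while if $n=1$ then $R$ is local and, because in a finite ring every non-unit is a zero-divisor, $Z(R)=\mathfrak{m}$. \emph{Second:} $R$ is a subring of a product of two integral domains if and only if $R\simeq F_1\times F_2$ is a product of two finite fields. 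A subring of a product of domains is reduced, and a finite reduced ring is a product of finite fields $F_1\times\cdots\times F_k$; an injection $R\hookrightarrow D_1\times D_2$ pulls the two minimal primes of $D_1\times D_2$ back to two distinct primes of $R$ whose intersection is $(0)$, which for a product of $k\geq 3$ fields is impossible, since any two of its (minimal = maximal) primes meet in a nonzero ideal. This forces $k\leq 2$, hence $k=2$ because $R$ is not a field; the converse is immediate.

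With this dictionary the three equivalences fall out of Theorem 3.3. For (1), Theorem 3.3(1) gives $diam(\Gamma(R[X]))=1$ if and only if $Z(R)^2=(0)$; by the first part of the preliminary Lemma 1.7, the condition $Z(R)^2=(0)$ forces $Z(R)$ to be an ideal, hence $R$ local with $\mathfrak{m}^2=Z(R)^2=(0)$, and the converse is clear. Part (2) is Theorem 3.3(2) read through the dictionary: ``subring of a product of two domains'' becomes ``product of two fields'', and ``$Z(R)$ an ideal with $Z(R)^2\neq(0)$'' becomes ``$R$ local with $\mathfrak{m}^2\neq(0)$''. For (3), after dropping the McCoy alternative as above, Theorem 3.3(3) reads ``$Z(R)$ is not an ideal and $R$ is not a subring of a product of two domains'', i.e. ``$R$ is not local and $R$ is not a product of two fields''.

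The step I expect to be the main obstacle, and where I would be most careful, is part (3): I would deliberately derive it from Theorem 3.3(3) rather than from the noetherian Corollary 3.5, because booleanness of $R$ is irrelevant to $\Gamma(R[X])$ (the ring $R[X]$ is never boolean). Concretely, a finite boolean ring $\mathbb{Z}_2^n$ with $n\geq 3$ is neither local nor a product of two fields, so it must fall under case (3), and indeed $\Gamma(\mathbb{Z}_2^n[X])$ has diameter $3$, in agreement with Theorem 3.3(3) and with the statement of Corollary 3.6 as written. The only genuinely non-formal ingredient is the minimal-primes count in the second dictionary entry, which is precisely what prevents a subring of a product of two domains from possessing three or more residue fields.
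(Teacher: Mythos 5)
Your proof is correct, and it supplies exactly the ingredients the paper leaves implicit: the paper states this corollary without any proof, placing it right after the noetherian Corollary 3.5, so the intended derivation is presumably ``finite $\Rightarrow$ noetherian (hence McCoy), then translate via the decomposition of $R$ into finite local rings.'' Your two dictionary lemmas --- $Z(R)$ is an ideal iff $R$ is local (with $Z(R)=\mathfrak{m}$, since in a finite ring every non-unit is a zero-divisor), and $R$ is a subring of a product of two integral domains iff $R$ is a product of two finite fields --- are precisely that translation, and both arguments are sound. Where you genuinely depart from the paper, and improve on it, is part (3): you derive it from Theorem 3.3 rather than from Corollary 3.5, on the grounds that booleanness of $R$ is irrelevant to $\Gamma(R[X])$. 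This is not just a stylistic preference: Corollary 3.5(3) as printed requires $R$ to be ``neither boolean nor a subring of a product of two integral domains,'' so for $R=\mathbb{Z}_2^n$ with $n\geq 3$ (noetherian, $Z(R)$ not an ideal, boolean) it would deny that $diam(\Gamma(R[X]))=3$, contradicting Theorem 3.3(3) and Lucas's theorem; the ``neither boolean'' clause there is an erroneous carry-over from the $\Gamma(R)$ statement in Corollary 1.14. Your route through Theorem 3.3 avoids importing that error and is the one that actually establishes Corollary 3.6 as stated. One small point you use implicitly and could make explicit in the second dictionary entry: the two primes pulled back from $D_1\times D_2$ are distinct, because if they coincided then $(0)$ would be prime, making the finite ring $R$ a domain, hence a field, contrary to hypothesis.
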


\begin{corollary}Let $n>1$ a composite integer.
\begin{enumerate}[(1)]
  \item  $\Gamma(\mathbb{Z}_n[X])=1$  If and only if if $n=p^2$ with $p$ is prime.
  \item $diam(\Gamma(\mathbb{Z}_n[X]))=2$ if and only if $n$ is a
  product of two distinct primes or $n=p^k$ with $k>2$ and $p$ is prime.
  \item $diam(\Gamma(\mathbb{Z}_n[X]))=3$ if and only if $n$ is neither a power of a prime number
  nor a  product of two distinct primes.
\end{enumerate}
\end{corollary}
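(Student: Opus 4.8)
The plan is to apply the finite-ring characterization, Corollary 3.6, to $R=\mathbb{Z}_n$ and then translate each structural hypothesis on $\mathbb{Z}_n$ into an arithmetic condition on $n$ by means of the Chinese Remainder Theorem. Since $n$ is composite, $\mathbb{Z}_n$ is a finite ring which is not a field (a finite integral domain is a field), so Corollary 3.6 applies directly with $R[X]=\mathbb{Z}_n[X]$. Writing the prime factorization $n=p_1^{k_1}\cdots p_r^{k_r}$ with the $p_i$ pairwise distinct primes, the CRT yields $\mathbb{Z}_n\simeq\prod_{i=1}^{r}\mathbb{Z}_{p_i^{k_i}}$, and each factor $\mathbb{Z}_{p_i^{k_i}}$ is local with maximal ideal $p_i\mathbb{Z}_{p_i^{k_i}}$. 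I would record once and for all the elementary facts that $\mathbb{Z}_{p^k}$ is a field iff $k=1$, that a finite product of local rings is local iff it has a single factor (i.e. $r=1$), and that in $\mathbb{Z}_{p^k}$ the square of the maximal ideal equals $(p^2)$, which is $(0)$ exactly when $k\le 2$.

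For (1): by Corollary 3.6(1), $diam(\Gamma(\mathbb{Z}_n[X]))=1$ iff $\mathbb{Z}_n$ is local with $\mathfrak{m}^2=(0)$. Locality forces $r=1$, i.e. $n=p^k$, and then $\mathfrak{m}^2=(p^2)=(0)$ holds iff $k\le 2$. As $n$ is composite we have $k\ge 2$, hence $k=2$ and $n=p^2$; conversely every $n=p^2$ meets both conditions, which yields (1).

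For (2): by Corollary 3.6(2), $diam(\Gamma(\mathbb{Z}_n[X]))=2$ iff $\mathbb{Z}_n$ is a product of two fields or $\mathbb{Z}_n$ is local with $\mathfrak{m}^2\neq(0)$. Since the decomposition of $\mathbb{Z}_n$ into local rings is unique, $\mathbb{Z}_n$ is a product of exactly two fields iff $r=2$ and $k_1=k_2=1$, that is, $n=p_1p_2$ is a product of two distinct primes; the second alternative means, by the computation above, $n=p^k$ with $k>2$. The disjunction of these two possibilities is exactly the stated condition. Statement (3) then follows either by elimination, since a composite $n$ lies in exactly one of the three cases, or directly from Corollary 3.6(3): $diam=3$ iff $\mathbb{Z}_n$ is neither local nor a product of two fields, i.e. $n$ is neither a prime power nor a product of two distinct primes.

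The only delicate point is the bookkeeping of $\mathfrak{m}^2$ inside $\mathbb{Z}_{p^k}$, which is precisely what separates $n=p^2$ (diameter $1$) from $n=p^k$ with $k>2$ (diameter $2$); every other step is a direct matching of the type of CRT factor against Corollary 3.6. I would also flag the contrast with the corresponding result for $\Gamma(\mathbb{Z}_n)$, where $n=4$ is exceptional: here $\mathbb{Z}_4$ is local with $\mathfrak{m}^2=(0)$, so $n=4=2^2$ falls into case (1) and no oddness restriction on $p$ is needed.
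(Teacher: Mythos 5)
Your proposal is correct and is essentially the paper's own (implicit) derivation: the paper states this corollary as an immediate specialization of Corollary 3.6 to $R=\mathbb{Z}_n$, which is exactly what you do via the CRT decomposition and the computation of $\mathfrak{m}^2$ in $\mathbb{Z}_{p^k}$. Your closing observation about $n=4$ correctly explains why, unlike the corresponding result for $\Gamma(\mathbb{Z}_n)$, no oddness restriction on $p$ appears here.
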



\end{sloppypar}

\end{document}